\documentclass[a4paper,10pt]{amsart}

\usepackage[margin=1in]{geometry}
\usepackage{amsmath,amssymb,mathtools}
\usepackage{microtype}
\usepackage[colorlinks=true,linkcolor=blue,citecolor=blue,urlcolor=blue]{hyperref}

\newtheorem{theorem}{Theorem}[section]
\newtheorem{lemma}[theorem]{Lemma}
\newtheorem{proposition}[theorem]{Proposition}
\newtheorem{corollary}[theorem]{Corollary}
\theoremstyle{remark}

\newcommand{\N}{\mathbb N}
\newcommand{\Q}{\mathbb Q}
\newcommand{\Z}{\mathbb Z}
\newcommand{\Int}{\operatorname{Int}}
\newcommand{\Span}{\operatorname{span}}

\title{Sparse polynomial-weighted expansions}
\author{Han Wang}
\email{han@hanziwww.com}
\date{}
\subjclass[2020]{Primary 11J72; Secondary 11B05}
\keywords{irrationality, fixed-base expansions, polynomial weights, sparse sequences,
  carry recurrence, Erd\H{o}s Problem 260}
\hypersetup{
  pdftitle={Sparse polynomial-weighted expansions},
  pdfauthor={Han Wang},
  pdfsubject={Polynomial-window density for rational weighted fixed-base expansions},
  pdfkeywords={irrationality, fixed-base expansions, polynomial weights,
    sparse sequences, carry recurrence, Erdos Problem 260}
}

\begin{document}

\begin{abstract}
Let $b\ge2$ be an integer, let $p\in\Q[x]$ be nonzero, and let
$S\subseteq\N$ be infinite. We prove that if
$\sum_{n\in S}p(n)b^{-n}$ is rational, then, for every fixed
$0<\theta\le1$, there is a constant $c>0$ such that
$|S\cap(N,N+W]|\ge cW$ for all sufficiently large $N$ and every
$N^\theta\le W\le N$. Thus rationality forces positive lower density; if
$S=\{a_1<a_2<\cdots\}$ and $d=\deg p$, it also forces
$a_{j+1}-a_j\le d\log_ba_j+O(1)$. In the binary linear case, the result
implies the irrationality conjectured by Erd\H{o}s whenever $a_j/j\to\infty$.
The proof turns rationality into an integer carry orbit of polynomial height.
Repeated gap words lock the orbit onto rational polynomial graphs, while the
normalized highest Newton coefficient evolves by an expanding affine map.
Denominator preservation, polynomial sampling, and an interior--exterior
count then rule out sparse polynomial-scale windows.
\end{abstract}

\maketitle

\section{Introduction}

Throughout, $\N=\{1,2,\ldots\}$. A rational number has an eventually periodic
canonical expansion in every integer base. That elementary characterization
does not directly decide the rationality of a series whose coefficients are
not constrained to the base-$b$ digit alphabet: carries may propagate across
long intervals and substantially change the visible support. For an infinite set
$S\subseteq\N$ and a nonzero polynomial $p\in\Q[x]$, consider the absolutely
convergent series
\[
 \Phi_{b,p}(S)=\sum_{n\in S}p(n)b^{-n},\qquad b\ge2.
\]
We ask how sparse $S$ can be when $\Phi_{b,p}(S)$ is rational. The answer is
local: at every sufficiently large location, and on every scale between a
fixed positive power of that location and the location itself, $S$ must
occupy a positive proportion of the interval.

The arithmetic of rapidly convergent series has traditionally been studied
by imposing integrality on suitably scaled tails. Oppenheim developed such
criteria for Cantor-series-type expansions~\cite{Oppenheim1954}, and
Erd\H{o}s and Straus obtained rationality criteria for product-denominator
series in terms of eventual integer recurrences~\cite{ErdosStraus}. Erd\H{o}s
also established irrationality results for sparse fixed-base
series~\cite{Erdos1957}. More recently, Han\v{c}l and Tijdeman treated
polynomial numerators over polynomially generated cumulative
denominators~\cite{HanclTijdeman}. These works exploit the arithmetic of the
successive denominators. In the present problem the denominator ratio is the
fixed integer $b$, whereas the arbitrary support $S$ carries the relevant
combinatorial information.

A different rigidity principle comes from the complexity of canonical digit
strings. Adamczewski and Bugeaud proved strong lower bounds for the block
complexity of integer-base expansions of irrational algebraic
numbers~\cite{AdamczewskiBugeaud}. Here, however, the sequence
$p(n)\mathbf1_S(n)$ is unbounded unless $p$ is constant, is not a canonical
digit sequence, and has no assumed finite descriptive structure. After carry
normalization, the canonical digits need not retain the geometry of $S$.
Digit-complexity criteria therefore do not directly address the question
above.

The motivating special case is $b=2$ and $p(x)=x$. If
$S=\{a_1<a_2<\cdots\}$, Erd\H{o}s asked whether
\[
 \sum_{j\ge1}a_j2^{-a_j}
\]
must be irrational whenever $a_j/j\to\infty$; see Erd\H{o}s and
Graham~\cite{ErdosGraham}. Erd\H{o}s proved irrationality under the stronger
condition $a_{j+1}-a_j\to\infty$~\cite{Erdos1981}. Borwein and Loring later
proved it when
\[
 \limsup_{j\to\infty}
 \frac{a_{j+1}-a_j}{\log a_{j+1}}=\infty
\]
and developed a base-change algorithm for representations of this
form~\cite{BorweinLoring}. Neither gap condition follows from
$a_j/j\to\infty$: that hypothesis gives global average sparsity but permits
long clusters of short gaps. The obstruction is therefore genuinely local.

Theorem~\ref{thm:main} replaces these exceptional-gap hypotheses by a
uniform density conclusion. For every fixed $0<\theta\le1$, rationality of
$\Phi_{b,p}(S)$ implies
\[
 |S\cap(N,N+W]|\ge cW
 \qquad (N^\theta\le W\le N)
\]
for all sufficiently large $N$, with one constant $c>0$ valid throughout
the displayed range of $W$. In particular, rationality forces positive lower
density and logarithmic support gaps. The binary linear case settles the
question above, while the general statement simultaneously allows every
integer base and every nonzero rational polynomial weight. The windowwise
conclusion is strictly more informative than the global density obstruction
needed for the Erd\H{o}s problem.

The passage from linear to polynomial weights changes the carry geometry.
After an integral normalization, rationality produces a positive integer
orbit $R_N$ satisfying
\[
 R_{N+1}=bR_N-Qw(N+1)\mathbf1_S(N+1),\qquad R_N\ll N^d.
\]
A repeated logarithmic word of support gaps then forces the corresponding
carry endpoints onto a single rational polynomial graph of degree at most
$d=\deg p$. If $\mu$ is its normalized highest Newton coefficient, appending
a gap $g$ induces the exact dynamics
\[
 \mu\longmapsto b^g\mu-1.
\]
This separates the argument into an expanding exterior regime and a narrow
interior regime with at most one admissible successor gap.

On a window $(N,N+W]$, a double count supplies a mass of forward gap words of
order $mW$. Exterior words are controlled by exponential growth of their
leading coefficient and a polynomial sublevel estimate. In the interior,
the part of the leading-state denominator coprime to $b$ is preserved. It
controls the mean gap and permits a decomposition into short canonical
blocks. A block word determines the exact leading state; frequent block-end
graphs coalesce after a common continuation, while an integral fibre
estimate controls their endpoints. The interior and exterior bounds are
incompatible with a sparse window.

The lower cutoff $W\ge N^\theta$ enters only when the subpower word counts and
endpoint errors are made uniform in the window. The resulting estimates do
not reach polylogarithmic windows, and no optimality of this restriction is
asserted. Section~2 develops the carry and locking mechanisms; Section~3
establishes the continuation counts and completes the window argument.

\begin{theorem}[Polynomial-window density]\label{thm:main}
Fix an integer $b\ge2$, let $p\in\Q[x]$ be nonzero, and let
$S\subseteq\N$ be infinite.
If
\[
 \eta=\sum_{n\in S}p(n)b^{-n}
\]
is rational, then for every fixed $0<\theta\le1$ there are constants $c>0$
and $N_0$ such that
\[
 |S\cap(N,N+W]|\ge cW
\]
for all integers $N,W$ with $N\ge N_0$ and
\[
 N^\theta\le W\le N.
\]
Let $A_p$ be the least positive integer with $A_p p\in\Z[x]$. The constant
$c$ may be chosen to depend only on $b,p,\theta$ and the part coprime to $b$
of the reduced
denominator of $A_p\eta$. The cutoff $N_0$ may also depend on $S$ and the full
rational value. Neither constant depends on $W$.
\end{theorem}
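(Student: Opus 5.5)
We first convert rationality into an integer carry orbit. Write $A_p p = w\in\Z[x]$ and $A_p\eta = P/Q'$ in lowest terms, and split $Q'=Q_0Q$ with $Q$ coprime to $b$ and every prime factor of $Q_0$ dividing $b$. For large $N$ put
\[
 R_N = Q\,b^{N+e}\Bigl(A_p\eta-\sum_{n\in S,\,n\le N}w(n)b^{-n}\Bigr)
 =Qb^{e}\sum_{n\in S,\,n>N}w(n)b^{N-n},
\]
with $e$ chosen so that $b^e$ absorbs $Q_0$. Then $R_N\in\Z$. Since $w$ has positive leading coefficient after a sign change, the tail sum is positive for large $N$. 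It satisfies $R_{N+1}=bR_N-Qb^ew(N+1)\mathbf 1_S(N+1)$ and $0<R_N\ll N^d$, because the tail is dominated by its first term up to a factor $b^{g}$. When $n=N+1\notin S$ the orbit multiplies by $b$, so on a gap of length $g$ starting at a carry endpoint we get $R\,b^g\ll N^d$. This already gives $g\le d\log_b N+O(1)$, which is the gap statement. The density statement needs more.

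Next, suppose a window $(N,N+W]$ contains fewer than $cW$ points of $S$. By double counting we obtain about $mW$ forward gap words of length $m\asymp\log N$, with large average gap $\ge 1/c$. For a fixed word $\omega=(g_1,\dots,g_k)$, iterating the recurrence expresses the endpoint value $R_{n+|\omega|}$ as $b^{|\omega|}R_n$ minus an explicit polynomial in $n$ of degree $d$. Boundedness $R\ll N^d$ together with repetition of $\omega$ at many positions $n$ forces $R_n$ to lie on a single rational polynomial graph $R_n=f_\omega(n)$ with $\deg f_\omega\le d$; this is the locking step. Put $\mu$ equal to the leading Newton coefficient of $f$, normalized by that of $Qb^ew$. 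Appending a gap $g$ then gives exactly $\mu\mapsto b^g\mu-1$.

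The argument then splits by the size of $\mu$. In the exterior regime, $|\mu|$ bounded away from the repelling fixed points, $|\mu|$ grows like $b^{\sum g}$, which contradicts $R\ll N^d$ after $O(\log N)$ steps. A polynomial sublevel estimate shows that few positions in the window realize such words. In the interior regime at most one successor gap is admissible at each step. The $b$-coprime part of the denominator of $\mu$ is bounded by $Q$ and is preserved by the map. This bounds the period, so the interior dynamics is eventually periodic with a mean gap determined by $Q$, and hence the density is at least some $c_0(b,p,Q)$.

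Choosing $c<c_0$ then yields a contradiction, provided the counts are made uniform. We cut interior words into short canonical blocks, make block-end graphs coalesce after a common continuation, and bound the integer points near each graph by a fibre estimate. Here $W\ge N^\theta$ is needed so that the number of words, about $N^{o(1)}$, and the endpoint errors are smaller than the window. I expect this interior counting to be the main obstacle. Showing that the locked graphs really coalesce, rather than branching into about $W^{1-\epsilon}$ distinct graphs, is the delicate point. I would first try to bound the number of distinct leading states reachable from block words by the denominator bound $Q$.
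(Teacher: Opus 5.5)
\textbf{Gap: the bound on the interior denominator.} Your carry orbit, gap bound, locking step and the map $\mu\mapsto b^g\mu-1$ agree with the paper. The interior step, however, rests on a claim that is not available: that the $b$-coprime part $q$ of the denominator of $\mu$ is bounded by $Q$.

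The locked graph interpolates the integer carry values $R_y$ at $d+1$ terminal points scattered across the window. The only control on its denominators therefore comes from Cramer's rule and the Vandermonde determinant of those points, which gives $q\ll W^{s}$ with $s=d(d+1)/2$. Already for $d=1$ the slope is $(R_{y_2}-R_{y_1})/(y_2-y_1)$, whose denominator can pick up factors of the spacing $y_2-y_1$. The constant $Q$ enters the recurrence only as a multiplier and does not control this.

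Preservation of $q$ is correct. But with $q$ as large as $W^{s}$, the relation $b^{g_j}r_{j-1}=q+r_j$ only gives $g_j\le\log_b(C_bq)$ and a mean gap $z$ with $b^z\ll q$, so $z$ can reach about $s\log_bW$. That is compatible with a window of density $\asymp1/\log N$, so no constant $c_0(b,p,Q)$ follows. Eventual periodicity does not help either: an interior segment has at most $m$ gaps, while the period may be of order $q$. If your claim held, interior gaps would be uniformly bounded and the block and coalescence counting you mention afterwards would be superfluous.

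\textbf{What is missing.} You need the mechanism that turns a large preserved denominator into scarcity of endpoints; this is the heart of the proof, not uniformity bookkeeping. The paper sorts interior segments into dyadic bands $D\le q<2D$, $Z\le z<2Z$, cuts them into blocks of span $\asymp\log_bD$, and uses four facts.
\begin{enumerate}
\item Block words are few: $\mathcal N(D,Z)\,\ell_D\ll D^{\tau}$ with $\tau\to0$ as $Z\to\infty$. Here $Z$ is large because $z\ge V_k/(8m)$ with $m=\lfloor L\sqrt\delta\rfloor$; this is why $m$ must shrink with $\delta$ rather than be a fixed multiple of $\log N$.
\item A block word and its band determine the exact leading state, since the admissible initial states lie in an interval shorter than $1/(4D^2)$.
\item Two graphs with the same block word, each realized at $\ge W^{1/2}$ endpoints, coincide. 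After a common interior continuation of span $F$ exceeding a suitable multiple of $L$, their difference is a multiple of $b^F$, yet polynomial sampling bounds it by $N^{2s+3d/2}$.
\item An integral-fibre estimate: a degree-$d$ polynomial whose leading coefficient has reduced denominator $q_0\gg D$ is integral at no more than $d+dWq_0^{-1/s}$ integers of a window of length $W$.
\end{enumerate}
Together these bound the interior mass by $mW$ times a sum of $D^{\tau-1/s}$ over bands with $Z\ge Z_{\min}(\delta_0)$, plus a negligible low-frequency term. Your closing idea of bounding reachable leading states via $Q$ inherits the same unfounded premise.

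\textbf{A smaller point on the exterior regime.} Exterior states need not be bounded away from the fixed points. Only the denominator bound gives a displacement $\gg W^{-s}$, and an exterior span exceeding a fixed multiple of $L$ is then needed before the sublevel estimate applies.
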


\begin{corollary}[Density and gaps]\label{cor:erdos}
Fix $b\ge2$, let $p\in\Q[x]$ be nonzero of degree $d$, let
$S\subseteq\N$ be infinite, and set
\[
 \eta=\sum_{n\in S}p(n)b^{-n}.
\]
If $\eta$ is rational, then
\[
 \underline d(S):=\liminf_{x\to\infty}\frac{|S\cap[1,x]|}{x}>0.
\]
Writing $S=\{a_1<a_2<\cdots\}$, one also has
\[
 a_{j+1}-a_j\le d\log_ba_j+O_{b,p,\eta}(1).
\]
For every $\varepsilon>0$, rationality also gives
\[
 a_{j+1}-a_j\le a_j^\varepsilon
\]
for all sufficiently large $j$. Consequently, if
$\underline d(S)=0$ (equivalently,
$\limsup_{j\to\infty}a_j/j=\infty$), then $\eta$ is irrational.
Moreover, rationality implies
\[
 a_{j+1}-a_j=a_j^{o(1)}.
\]
\end{corollary}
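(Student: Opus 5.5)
Most of the corollary comes from Theorem~\ref{thm:main}; only the $d\log_b a_j$ gap bound needs a separate, elementary carry argument. We take each assertion in turn.

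\emph{Lower density.} Apply Theorem~\ref{thm:main} with $\theta=1$ and $W=N$. For $N\ge N_0$ this gives $|S\cap(N,2N]|\ge cN$. For $x\ge 2N_0$ put $N=\lfloor x/2\rfloor$. Then $|S\cap[1,x]|\ge |S\cap(N,2N]|\ge c\lfloor x/2\rfloor$, so $\underline d(S)\ge c/2>0$. The equivalence of $\underline d(S)=0$ with $\limsup a_j/j=\infty$ is standard, since $|S\cap[1,a_j]|=j$; it yields the irrationality statement by contraposition.

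\emph{Subpower gaps.} Fix $\varepsilon\in(0,1]$ and apply the theorem with $\theta=\varepsilon/2$. Suppose $j$ is large and $g=a_{j+1}-a_j>a_j^{\varepsilon}$. Set $N=a_j$ and $W=\lceil N^{\varepsilon/2}\rceil$. Then $N^\theta\le W\le N$, and $(N,N+W]\subseteq(a_j,a_{j+1})$ contains no element of $S$. This contradicts $|S\cap(N,N+W]|\ge cW>0$. Since $\varepsilon$ is arbitrary, $a_{j+1}-a_j=a_j^{o(1)}$.

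\emph{The $d\log_b a_j$ bound.} This does not follow from the theorem's window range, which stops at $W\ge N^\theta$. We argue directly with the carry orbit, using the integral normalization from the introduction. Write $A_p\eta=P/Q'$ and let $Q$ be a common multiple making the following integral: set $w=A_pp\in\Z[x]$ and
\[
R_N=Q\,b^N\sum_{n\in S,\,n>N}w(n)b^{-n}.
\]
By rationality $R_N$ is an integer for large $N$. (Here we may clear the $b$-part of $Q'$ by an extra factor, since the integrality of $Qb^N(\eta-\text{partial sum})$ only needs $Q\eta\in b^{-k}\Z$ for a fixed $k$.) It satisfies
\[
R_{N+1}=bR_N-Qw(N+1)\mathbf 1_S(N+1),\qquad |R_N|\ll N^d .
\]
If $w(n)>0$ for large $n$, then $R_N>0$ for large $N$, because the tail is positive and nonzero as $S$ is infinite. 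Replacing $p$ by $-p$ if needed, assume this.

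On a gap $(a_j,a_{j+1})$ with no element of $S$, we have $R_{a_{j+1}-1}=b^{g-1}R_{a_j}$. Since $R_{a_j}\ge1$ and $R_{a_{j+1}-1}\ll a_{j+1}^d$, this gives
\[
b^{g-1}\le C a_{j+1}^d=C(a_j+g)^d .
\]
The $\varepsilon$-bound already proved gives $g\le a_j$, so $(a_j+g)^d\le 2^da_j^d$. Hence $g\le d\log_b a_j+O(1)$, with the constant depending on $b$, $p$ and $Q$, and so on $\eta$.

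The main care point is this integrality and positivity normalization. One must choose $Q$ to include any $b$-part of the denominator, via an index shift, and check the sign of the tail for large $N$. Everything else is direct bookkeeping from Theorem~\ref{thm:main}.
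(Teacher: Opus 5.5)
Your proof is correct, and the density part (Theorem~\ref{thm:main} with $\theta=1$, $W=N$, $N=\lfloor x/2\rfloor$) is the same as the paper's. For the gap assertions you reverse the paper's order.

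The paper takes the logarithmic bound directly from the last assertion of Lemma~\ref{lem:carries}, whose proof is exactly the carry computation you redo. It then gets $a_{j+1}-a_j\le a_j^\varepsilon$ for free from $d\log_b x+O(1)\le x^\varepsilon$. You instead derive the subpower bound from Theorem~\ref{thm:main} with $\theta=\varepsilon/2$ and an empty window $(a_j,a_j+\lceil a_j^{\varepsilon/2}\rceil]$. You then feed $g\le a_j$ back into the carry inequality to reach $g\le d\log_b a_j+O(1)$.

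The window argument is valid, but it is unnecessary. The inequality $b^{g-1}\le C(a_j+g)^d$ already excludes $g\ge a_j$ for large $a_j$, since it would give $b^{g-1}\le C(2g)^d$ with $g\to\infty$.

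What the paper's route buys is twofold. First, every gap assertion becomes an elementary consequence of the carry orbit, and only the positive lower density needs the main theorem. Second, the range of indices where the logarithmic bound holds depends only on the normalization data ($b$, $w$, $Q$ and the $b$-primary part of the denominator). In your version, the range where $g\le a_j$ is known comes from the theorem's cutoff $N_0$, which is allowed to depend on $S$. So if you want the constant to be genuinely $O_{b,p,\eta}(1)$, replace that step by the direct observation above.
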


\begin{proof}
Apply Theorem~\ref{thm:main} with $\theta=1$ and $W=N$. For large $x$, taking
$N=\lfloor x/2\rfloor$ gives
$|S\cap[1,x]|\ge|S\cap(N,2N]|\ge cN$, and hence
$\underline d(S)>0$. For an increasing enumeration of $S$, positive lower
density is equivalent to $a_j\ll j$. The gap bound is the last assertion of
Lemma~\ref{lem:carries}; the normalization below changes only finitely many
indices. For every $\varepsilon>0$, one has
$d\log_b x+O(1)\le x^\varepsilon$ for all sufficiently large $x$, which gives
the remaining gap assertion.
\end{proof}

\paragraph{Example.}
For $q\ge1$, the set $S=q\N$ satisfies
\[
 \sum_{n\in S}p(n)b^{-n}
 =\sum_{k\ge1}p(qk)(b^{-q})^k\in\Q,
\]
since the generating function of a polynomial sequence is rational. Here
$\underline d(S)=1/q$, so the positive constant in
Theorem~\ref{thm:main} cannot be uniform over all rational values.

Let $A_p$ be as in Theorem~\ref{thm:main}. After changing sign and deleting
finitely many indices, it is enough to consider the integer-valued polynomial
\[
 w(n)=\sum_{j=0}^d\omega_j\binom nj,\qquad
 \omega_j\in\Z,\qquad \omega_d>0,
\]
which is positive on the remaining tail. These operations preserve all
eventual density conclusions. The resulting series differs from
$\pm A_p\eta$ by a rational number whose denominator divides a power of $b$.
Write the resulting rational series as $P/Q_{\rm full}$ in lowest terms and
put
\[
 Q_{\parallel}=\prod_{\ell\mid b}\ell^{v_\ell(Q_{\rm full})},
 \qquad Q=Q_{\rm full}/Q_{\parallel}.
\]
The product is over primes $\ell$ dividing $b$, so $(Q,b)=1$. All implicit
constants below may depend on $b,w,Q,\theta$, but not on $N$ or $W$; the
discarded range may depend on $Q_{\parallel}$. Finite deletion can only replace
the coprime part of the denominator of $A_p\eta$ by a divisor. Taking the
minimum over these finitely many divisors gives the dependence asserted in
Theorem~\ref{thm:main}.

\section{Carries and polynomial-scale windows}

Write $d_n=\mathbf1_S(n)$ and, for all sufficiently large $N$, define
\[
 R_N=Q\sum_{j\ge1}w(N+j)d_{N+j}b^{-j}.
\]
List the support as $a_1<a_2<\cdots$ and put $g_i=a_{i+1}-a_i$.

\begin{lemma}[Carries and gaps]\label{lem:carries}
For all sufficiently large $N$,
\[
 R_N\in\Z,\qquad
 R_{N+1}=bR_N-Qw(N+1)d_{N+1},
\]
and
\[
 1\le R_N\le C(N+1)^d.
\]
Moreover,
\[
 g_i\le d\log_ba_i+C
\]
for all sufficiently large $i$.
\end{lemma}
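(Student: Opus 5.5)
The plan is to obtain integrality from the rational value, the recurrence and the upper bound from the defining series, positivity from the infinitude of $S$ together with positivity of $w$, and the gap bound by iterating the recurrence across an empty stretch.

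\emph{Integrality.} Let $\eta'=\sum_{n\in S'}w(n)b^{-n}=P/Q_{\rm full}$ denote the normalized series, where $S'$ is the tail of $S$ that was kept. For large $N$ write
\[
 R_N=Qb^N\Bigl(\eta'-\sum_{n\le N}w(n)d_nb^{-n}\Bigr).
\]
The finite sum times $Qb^N$ is an integer, since $w$ is integer-valued and $n\le N$. Next, $Qb^N\eta'=b^NP/Q_{\parallel}$, and $Q_{\parallel}$ is composed only of primes dividing $b$. Hence $Q_{\parallel}\mid b^N$ once $N\ge N_1(Q_{\parallel})$, which is where the discarded range depends on $Q_\parallel$. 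So $R_N\in\Z$.

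\emph{Recurrence.} The recurrence is immediate from this formula, or equally from shifting the index in the defining series:
\[
 bR_N=Qw(N+1)d_{N+1}+Q\sum_{j\ge1}w(N+1+j)d_{N+1+j}b^{-j}.
\]

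\emph{Bounds.} Since $w>0$ on the tail and $S$ is infinite, $R_N>0$, and an integer that is positive satisfies $R_N\ge1$. For the upper bound, use $w(n)\le C_1n^d$ and
\[
 (N+j)^d\le (N+1)^d j^d .
\]
This gives
\[
 R_N\le QC_1(N+1)^d\sum_{j\ge1}j^db^{-j}=C(N+1)^d.
\]

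\emph{Gap bound.} Fix a large $i$ and set $N=a_i$. Between consecutive support points, $d_{N+k}=0$ for $1\le k<g_i$, so the recurrence iterates to
\[
 R_{a_{i+1}-1}=b^{g_i-1}R_{a_i}\ge b^{g_i-1}.
\]
Combining this with the upper bound at $a_{i+1}-1$ gives
\[
 b^{g_i-1}\le C a_{i+1}^d=C(a_i+g_i)^d.
\]
The right side still involves $g_i$, so it must be removed. First, a crude bound: if $g_i\ge a_i$, then $b^{g_i-1}\le C(2g_i)^d$, which forces $g_i=O(1)$. That contradicts $g_i\ge a_i$ for large $i$, so $g_i<a_i$. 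Then
\[
 b^{g_i-1}\le C2^da_i^d,
\]
which gives $g_i\le d\log_b a_i+O(1)$.

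The only delicate points are bookkeeping: ensuring the $b$-part of the denominator is absorbed for large $N$, and the self-referential $a_{i+1}$ in the gap estimate. Both are handled as above. A sharper constant is available by using $R_{a_{i+1}}+Qw(a_{i+1})=bR_{a_{i+1}-1}$, but it is not needed.
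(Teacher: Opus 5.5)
Your proof is correct and follows essentially the same route as the paper. You get integrality from $Q_{\parallel}\mid b^N$ for large $N$ (you write $Qb^N\eta'=b^NP/Q_{\parallel}$ directly, where the paper multiplies through by $Q_{\parallel}$), the recurrence by an index shift, and $R_N\ge1$ from positivity of the tail. The gap bound comes from $R_{a_{i+1}-1}=b^{g_i-1}R_{a_i}\ge b^{g_i-1}$ together with ruling out $g_i\ge a_i$, exactly as in the paper, and your explicit estimate $(N+j)^d\le(N+1)^dj^d$ simply fills in the paper's $\ll(N+1)^d$.
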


\begin{proof}
Choose $N_*$ so that $Q_{\parallel}\mid b^{N_*}$. For $N\ge N_*$,
\[
 Q_{\parallel}R_N
 =Pb^N-Q_{\rm full}\sum_{n\le N}w(n)d_nb^{N-n},
\]
and both terms on the right are divisible by $Q_{\parallel}$: the first by
the choice of $N_*$, the second because $Q_{\parallel}\mid Q_{\rm full}$.
Division by $Q_{\parallel}$ gives $R_N\in\Z$; the recurrence follows from the
definition. Positivity of the tail and infinitude of $S$ give $R_N\ge1$, while
\[
 R_N\le Q\sum_{j\ge1}w(N+j)b^{-j}\ll(N+1)^d.
\]
If $x=a_i$ and $x+g_i=a_{i+1}$, then
$R_{x+g_i-1}=b^{g_i-1}R_x\ge b^{g_i-1}$. Hence
\[
 b^{g_i-1}\ll(x+g_i)^d.
\]
The displayed inequality excludes $g_i\ge x$ for large $x$; hence
$x+g_i<2x$, which gives the logarithmic bound.
\end{proof}

If $d=0$, Lemma~\ref{lem:carries} gives $g_i\le C$ for all sufficiently large
$i$. Any interval of length $W$ beyond this finite range then contains at
least $W/C-O(1)$ support points. Since $W\ge N^\theta\to\infty$, this proves
Theorem~\ref{thm:main} in degree zero. Assume henceforth that $d\ge1$ and put
\[
 s=\frac{d(d+1)}2,\qquad \sigma=\frac1s.
\]

Fix an admissible interval $I=(N,N+W]$ and set
\[
 L=\lceil\log_bN\rceil,\qquad
 M=|S\cap I|,\qquad
 \delta=\frac MW,\qquad
 m=\left\lfloor L\sqrt\delta\right\rfloor.
\]
For large $N$, Lemma~\ref{lem:carries} gives $\delta>0$. Suppose that
$\delta\le\delta_0$, where $\delta_0\le1/4$ will be chosen after all fixed
constants.

\begin{lemma}[The local scale]\label{lem:localscale}
Uniformly for $N^\theta\le W\le N$,
\[
 \delta\gg\frac1L,\qquad m\to\infty,\qquad m<M.
\]
If $a_k\in I$ and $0\le r\le m$, then
\[
 a_{k+r}\le N+W+O(L^2)=N+W+o(W),
\]
and every gap occurring in these windows is at most $dL+C$.
\end{lemma}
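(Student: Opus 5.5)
The plan is to derive everything from the gap bound in Lemma~\ref{lem:carries} and elementary bookkeeping with $L=\lceil\log_bN\rceil$ and $W\ge N^\theta$.

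\emph{Step 1: $\delta\gg 1/L$.} For $N$ large, every support point in $(N,2N]$ is $a_i$ with $i$ large. By Lemma~\ref{lem:carries}, every gap starting at such a point is at most $d\log_b(2N)+C\le dL+C'$. The interval $I$ has length $W\ge N^\theta$, which is much larger than $dL+C'$. Hence $I$ contains at least
\[
 M\ge \frac{W}{dL+C'}-1\gg \frac{W}{L}
\]
support points. (A point of $S$ lies at or before $N$, and consecutive points are at most $dL+C'$ apart until past $N+W$.) Thus $\delta=M/W\gg1/L$.

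\emph{Step 2: $m\to\infty$ and $m<M$.} From Step~1, $L\sqrt\delta\gg\sqrt L\to\infty$ uniformly in $W$, so $m\to\infty$. For the upper bound we have $m\le L\sqrt\delta$, and we want $L\sqrt\delta<\delta W=M$, i.e.\ $L<\sqrt\delta\,W$. Since $\sqrt\delta\gg L^{-1/2}$ and $W\ge N^\theta=b^{\theta\log_bN}\gg L^{2}$ for large $N$, this holds, uniformly in the admissible range of $W$.

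\emph{Step 3: the forward windows.} Let $a_k\in I$ and $0\le r\le m$. Each of the gaps $g_k,\dots,g_{k+r-1}$ starts at a point $a_{k+j}$. Induct on $j$: as long as $a_{k+j}\le 2N$, the gap bound gives $g_{k+j}\le d\log_b(2N)+C\le dL+C$ (after enlarging $C$). Therefore
\[
 a_{k+r}\le a_k+r(dL+C)\le N+W+m(dL+C).
\]
Since $m\le L\sqrt\delta\le L$, this is $N+W+O(L^2)$. Because $N+W+O(L^2)\le 2N+O(L^2)$, the induction stays in the range where the bound $\log_b a\le L+O(1)$ applies; if needed, replace $2N$ by $3N$ and absorb the difference into $C$. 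Finally, $L^2=O((\log N)^2)=o(N^\theta)=o(W)$, uniformly. This gives both the displacement bound and the assertion that every gap in these windows is at most $dL+C$.

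The only delicate point is uniformity: every estimate must hold for all $W\in[N^\theta,N]$ simultaneously. This follows because each comparison only uses $W\ge N^\theta$, which dominates any fixed power of $L$. The standing assumption $\delta\le\delta_0$ is not needed here, beyond guaranteeing $\sqrt\delta\le1$.
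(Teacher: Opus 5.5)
Your proof is correct and follows essentially the same route as the paper. The gap bound of Lemma~\ref{lem:carries} gives $M\gg W/L$, and $W\ge N^\theta\gg L^2$ gives $m\to\infty$ and $m<M$; the paper phrases the latter as $M/m\gg W/L^{3/2}$. The forward windows are then controlled by a bootstrap that keeps the points below $3N$, which is the paper's ``first point above $3N$'' contradiction. As you already note, run the induction with the threshold $3N$ from the start, since $N+W$ can already equal $2N$. Also, $\sqrt\delta\le1$ needs no assumption, because $M\le W$ always.
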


\begin{proof}
Applying Lemma~\ref{lem:carries} to the gaps straddling the endpoints of $I$
places its first and last support points within $O(L)$ of them and bounds all
intervening gaps by $O(L)$. Since
$W\ge N^\theta\gg L$,
\[
 M\gg\frac WL,\qquad \delta\gg\frac1L.
\]
Thus $m\to\infty$ and $M/m\gg W/L^{3/2}\to\infty$, uniformly in $W$.
If one of the next $m$ points after $a_k\in I$ were the first above $3N$,
Lemma~\ref{lem:carries} would bound its displacement from $a_k\le2N$ by
$O(mL)=O(L^2)=o(N)$, a contradiction. Thus all these points lie below $3N$;
their gaps are at most $dL+C$ and their total displacement is
$O(mL)=O(L^2)=o(W)$.
\end{proof}

For each $a_k\in I$, consider the next $m$ gaps and write
\[
 \mathbf g_k=(g_k,\ldots,g_{k+m-1}),\qquad
 V_k=a_{k+m}-a_k.
\]

\begin{lemma}[Window mass and word complexity]\label{lem:windows}
Uniformly for $N^\theta\le W\le N$,
\[
 \sum_{a_k\in I}V_k\ge(1-o(1))mW,\qquad
 V_k\le m(dL+C).
\]
For every fixed $C_1>0$ and
$0<\rho_0\le\min\{1/2,C_1/2\}$, the number of positive integer
words with at most $m\le\rho_0L$ entries and span at most $C_1L$ is at most
\[
 N^{\omega_{C_1}(\rho_0)+o(1)},
\]
where
\[
 \omega_{C_1}(\rho)
 =\frac{\rho}{\log b}\log\frac{e(C_1+1)}{\rho},
 \qquad
 \omega_{C_1}(\rho_0)\longrightarrow0
 \quad(\rho_0\downarrow0).
\]
All $o(1)$ terms are uniform over the admissible intervals.
\end{lemma}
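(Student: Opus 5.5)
The statement has three parts: a lower bound for the total window mass, a pointwise bound for each $V_k$, and a count of short words. I would treat them separately, using Lemma~\ref{lem:localscale} throughout.

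\emph{The pointwise bound on $V_k$.} By Lemma~\ref{lem:localscale}, every gap $g_k,\dots,g_{k+m-1}$ with $a_k\in I$ is at most $dL+C$. Summing these $m$ gaps gives $V_k\le m(dL+C)$ at once.

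\emph{The mass bound.} I would write
\[
V_k=\sum_{r=0}^{m-1}g_{k+r}
\]
and exchange the order of summation. A fixed gap $g_i=a_{i+1}-a_i$ is counted once for each $k$ with $a_k\in I$ and $k\le i\le k+m-1$. Suppose $a_i$ lies in $I$ and at least $m$ support points of $I$ precede it, including $a_i$ itself. Then the admissible $k$ are exactly $i-m+1,\dots,i$, all with $a_k\in I$, so $g_i$ is counted exactly $m$ times. Hence
\[
\sum_{a_k\in I}V_k\ \ge\ m\sum_{\substack{a_i\in I\\ i\ge k_0+m-1}} g_i,
\]
where $a_{k_0}$ is the first support point in $I$. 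The gaps $g_i$ with $a_i\in I$ telescope to the span from the first point of $I$ to the first point beyond it. By Lemma~\ref{lem:localscale}, this span is $W-O(L)$. Dropping the first $m-1$ gaps costs at most $(m-1)(dL+C)=O(L^2)$. Since $W\ge N^\theta$, both losses are $o(W)$, uniformly in the window. So the total is at least $m(W-o(W))=(1-o(1))mW$. The only care needed here is the boundary bookkeeping at the two ends of $I$.

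\emph{The word count.} A positive integer word with exactly $r$ entries and span (sum) at most $T=\lfloor C_1L\rfloor$ is a composition of some $t\le T$ into $r$ parts. By stars and bars, the number of such words is
\[
\binom{T}{r}\le\binom{T+r}{r}.
\]
Summing over $r\le m\le\rho_0L$ gives at most $(m+1)\binom{T+m}{m}$. Using $\binom{n}{k}\le(en/k)^k$ and the monotonicity of $k\mapsto(en/k)^k$ for $k\le n$, this is bounded by
\[
(m+1)\Bigl(\frac{e(C_1+\rho_0)L+e}{\rho_0 L}\Bigr)^{\rho_0L}.
\]
The monotonicity applies because $\rho_0\le C_1/2$, so $m\le T$, and $\rho_0\le1/2$. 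Since $\rho_0\le 1$, we have $C_1+\rho_0\le C_1+1$. This yields $\exp\bigl(\rho_0L\log(e(C_1+1)/\rho_0)+O(\log L)+O(1)\bigr)$. With $L=\log_bN+O(1)$, that equals $N^{\omega_{C_1}(\rho_0)+o(1)}$.

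Finally, $\omega_{C_1}(\rho)\to0$ as $\rho\downarrow0$ because $\rho\log(1/\rho)\to0$. None of these error terms depends on $W$, so the $o(1)$ terms are uniform over the admissible intervals. I expect no real obstacle; the main care is the endpoint bookkeeping in the mass bound.
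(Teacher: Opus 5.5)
Your proposal is correct and follows essentially the paper's proof. The paper organizes the same double count as the telescoped identity $\sum_{a_k\in I}V_k=\sum_{r=0}^{m-1}(a_{i+M+r}-a_{i+r})\ge mW-O(m^2L)$ instead of counting each gap's multiplicity, and it bounds words by $\binom{\lceil C_1L\rceil}{r}$ rather than your harmless overcount $\binom{T+m}{m}$. Since $n=T+m$ moves with $m$, the cleanest justification of your monotonicity step is that $x\mapsto x\log\bigl(e(T+x)/x\bigr)$ is increasing.
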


\begin{proof}
Let $a_i$ be the first support point in $I$. Reordering the double sum gives
\[
 \sum_{a_k\in I}V_k
 =\sum_{r=0}^{m-1}(a_{i+M+r}-a_{i+r})
 \ge mW-O(m^2L).
\]
Since $mL/W\le L^2/W=o(1)$ uniformly, the first assertion follows; the
second follows from Lemma~\ref{lem:localscale}. Finally,
\[
 \sum_{r\le m}\binom{\lceil C_1L\rceil}{r}
 \le m\left(\frac{eC_1L+e}{m}\right)^m.
\]
The function
\[
 x\longmapsto x\log\frac{e(C_1+1)}{x}
\]
is increasing on $(0,C_1/2]$. Since $L=(\log N)/(\log b)+O(1)$,
division by $\log N$ gives
the stated $\omega_{C_1}$; the errors from the integer parts are uniform in
$N^\theta\le W\le N$.
\end{proof}

Set
\[
 \Lambda_{\rm lock}=d+s+2,\qquad
 \Lambda_{\rm ext}=d+s+2,\qquad
 \Lambda_{\rm cont}=2s+\frac{3d}{2}+2.
\]
Choose $C_0$ sufficiently large in terms of these constants and $b,w,Q$.
A window is \emph{short} if $V_k\le C_0L$. Since
$m\ge\frac12L\sqrt\delta$ for large $N$,
\begin{equation}\label{eq:short}
 \sum_{V_k\le C_0L}V_k
 \le C_0LM
 \le2C_0\sqrt{\delta_0}\,mW.
\end{equation}

For a long window, take the shortest initial gap word whose span $G$ exceeds
$\Lambda_{\rm lock}L$. Then
\begin{equation}\label{eq:prefixspan}
 \Lambda_{\rm lock}L<G
 \le(\Lambda_{\rm lock}+d)L+C,\qquad
 V_k-G\ge\frac34V_k.
\end{equation}
Lemma~\ref{lem:windows} gives
\begin{equation}\label{eq:prefixcount}
 \mathcal P_N\le N^{\varepsilon_p+o(1)},\qquad
 \varepsilon_p
 =\omega_{\Lambda_{\rm lock}+d+1}(\sqrt{\delta_0})
 \longrightarrow0\quad(\delta_0\downarrow0)
\end{equation}
possible prefixes. Call a prefix rare if it occurs at most $d+1$ times.
The rare-prefix windows contribute
\begin{equation}\label{eq:rare}
 O_d(\mathcal P_NmL)=o(mW)
\end{equation}
once $\varepsilon_p<\theta$, since
$N^{\varepsilon_p+o(1)}L/W\le
N^{\varepsilon_p-\theta+o(1)}L$.

Fix a nonrare prefix $p=(h_1,\ldots,h_r)$, put
$G_j=h_1+\cdots+h_j$, $G=G_r$, and let $y=x+G$ be its terminal coordinate.
Iteration of the carry recurrence gives
\begin{equation}\label{eq:wordrec}
 R_y+QF_p(y)=b^GR_x,\qquad
 F_p(y)=\sum_{j=1}^rb^{G-G_j}w\bigl(y-(G-G_j)\bigr).
\end{equation}
The polynomial $F_p$ is integer-valued and has degree at most $d$.

\begin{lemma}[Polynomial locking]\label{lem:locking}
All terminal points of a nonrare prefix lie on one graph
\[
 R=P_p(y),\qquad P_p\in\Q[y],\qquad \deg P_p\le d.
\]
Moreover, for some integer $T_p$,
\begin{equation}\label{eq:graphden}
 1\le T_p\ll_d W^s,\qquad
 T_pP_p\in\Int_d(\Z),
\end{equation}
where $\Int_d(\Z)$ is the set of integer-valued polynomials of degree at
most $d$.
\end{lemma}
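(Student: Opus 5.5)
The plan is to interpolate through the starting points of the prefix, and then use \eqref{eq:wordrec} together with the size gap $b^G\gg N^{\Lambda_{\rm lock}}$ to force every other occurrence onto the same interpolant. Since the prefix $p$ is nonrare, it occurs at least $d+2$ times, at starting points $x_1<x_2<\cdots$ in $I$. Their terminal points $y_i=x_i+G$ all lie in $(N,N+2W]$ by Lemma~\ref{lem:localscale}, so $|y_i-y_j|\le2W$. Rewrite \eqref{eq:wordrec} as
\[
 R_{x}=U(y)+\varepsilon(y),\qquad U(y)=\frac{Q F_p(y)}{b^G},\qquad \varepsilon(y)=\frac{R_y}{b^G}.
\]
Here $U\in\Q[y]$ has degree at most $d$. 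By Lemma~\ref{lem:carries} and \eqref{eq:prefixspan}, $0<\varepsilon(y)\le C(3N)^d b^{-\Lambda_{\rm lock}L}\ll N^{-s-2}$.

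Let $H$ be the Lagrange interpolant of degree $\le d$ through the integer data $(y_i,R_{x_i})$ for $i=1,\dots,d+1$. Put $D=\prod_{1\le i<j\le d+1}(y_j-y_i)$, so that $1\le D\le(2W)^s$. Each Lagrange basis polynomial is $\prod_{j\ne i}(y-y_j)/\prod_{j\ne i}(y_i-y_j)$. Its numerator is integer-valued and its denominator divides $D$, so $DH\in\Int_d(\Z)$. In particular, at any further terminal point $y_k$ the number $R_{x_k}-H(y_k)$ is rational with denominator dividing $D$.

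Interpolation is linear and exact on $U$. Hence $H=U+\tilde\varepsilon$, where $\tilde\varepsilon$ interpolates $\varepsilon$ at $y_1,\dots,y_{d+1}$. It follows that
\[
 R_{x_k}-H(y_k)=\varepsilon(y_k)-\tilde\varepsilon(y_k).
\]
Since the nodes are distinct integers and $|y_k-y_j|\le2W$, each Lagrange basis value at $y_k$ is at most $(2W)^d$ in absolute value. Therefore
\[
 |R_{x_k}-H(y_k)|\ll W^dN^{-s-2}.
\]
Multiplying by $D\le(2W)^s$ gives a quantity $\ll W^{s+d}N^{-s-2}\le N^{d-2}\cdot N^{-?}$; more precisely, since $W\le N$, it is $\ll N^{-2+(s+d)-(s)}\cdot N^{-d}=N^{-2}$. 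This uses $\Lambda_{\rm lock}=d+s+2$: the factor $N^{d}$ bounding $R_y$ is absorbed, and the margin $N^{-s-2}$ beats $W^{s+d}$ with $N^{-2}$ room to spare. Since $D(R_{x_k}-H(y_k))$ is an integer of absolute value $<1$ for large $N$, it vanishes. Thus $R_x=H(y)$ at every occurrence, including the first $d+1$.

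Now define
\[
 P_p(y)=b^GH(y)-QF_p(y).
\]
Then \eqref{eq:wordrec} gives $R_y=P_p(y)$ at every terminal point, and $\deg P_p\le d$. Taking $T_p=D$, we have $T_pP_p=b^G(DH)-QD\,F_p$, which lies in $\Int_d(\Z)$ because $F_p$ is integer-valued. This gives \eqref{eq:graphden} with $T_p\le(2W)^s\ll_dW^s$.

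The main obstacle is the numerical bookkeeping in the comparison step. One must check that the Lagrange amplification $W^d$ and the denominator $D\le W^s$ are together beaten by $b^{-G}R_y$; this is exactly where the lower bound $G>\Lambda_{\rm lock}L$ in \eqref{eq:prefixspan} is used. One must also confirm, via Lemma~\ref{lem:localscale}, that all terminal points stay within $O(W)$ of each other, so that the Vandermonde-type products are polynomial in $W$ rather than in $N$.
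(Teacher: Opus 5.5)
Your strategy is sound and is essentially the paper's determinant argument in interpolation form. Up to sign, $D\,(R_{x_k}-H(y_k))$ is the $(d+2)\times(d+2)$ determinant with rows $(1,y,\dots,y^d,R_x)$. That determinant equals $b^{-G}$ times the same determinant with last column $R_y$. The paper phrases this as divisibility of the $R_y$-determinant by $b^G$; you phrase it as integrality of the $R_x$-determinant. Your construction $P_p=b^GH-QF_p$, $T_p=D$ is also fine.

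The decisive size estimate, however, fails as written. You bound $D\le(2W)^s$ and $|\ell_i(y_k)|\le(2W)^d$ separately, which gives $D\,|R_{x_k}-H(y_k)|\ll W^{s+d}N^{-s-2}$. When $W$ is comparable to $N$ this is only $\ll N^{d-2}$, which is not below $1$ for $d\ge2$. Your ``more precisely'' line produces an extra factor $N^{-d}$ from nowhere: the $N^d$ bounding $R_y$ was already used up in obtaining $\varepsilon\ll N^{-s-2}$. With $\Lambda_{\rm lock}=d+s+2$ there is no room for the lost factor $W^d$.

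The repair is to keep $D$ together with the Lagrange denominators. Up to sign, $D/\prod_{j\ne i}(y_i-y_j)$ is the Vandermonde product of the nodes other than $y_i$. Hence, up to sign, $D\,\ell_i(y_k)$ is the Vandermonde product of $y_1,\dots,y_{d+1}$ with $y_i$ replaced by $y_k$. It is therefore a product of $d+\binom d2=s$ differences of terminal points, each of absolute value less than $W$ because $y_i-y_j=x_i-x_j$. So $|D\,\ell_i(y_k)|<W^s$, and likewise $D<W^s$. This gives $D\,|R_{x_k}-H(y_k)|\le(d+2)W^s\max\varepsilon\ll W^sN^{-s-2}\le N^{-2}$, with the maximum taken over the $d+2$ terminal points. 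Your integrality argument then goes through.

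This cofactor bound is exactly the paper's estimate $|\det|\ll N^dW^s<b^G$. Enlarging $\Lambda_{\rm lock}$ by $d$ would also rescue your cruder bound, but with the paper's value the cancellation is needed.
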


\begin{proof}
For any $d+2$ distinct occurrences, form the integer determinant with rows
\[
 \left(1,\binom y1,\ldots,\binom yd,R_y\right).
\]
Since $F_p\in\Int_d(\Z)$, write
\[
 QF_p(y)=\sum_{j=0}^dc_j\binom yj,\qquad c_j\in\Z,
\]
and add $c_j$ times the column $\binom yj$ to the last column for
$0\le j\le d$. By~\eqref{eq:wordrec}, its entries become $b^GR_x$; the
determinant is therefore divisible by $b^G$. Expanding this unchanged
determinant in its original last column and using the Vandermonde formula give
\[
 |\det|\ll_d N^dW^s.
\]
Here all terminal coordinates lie in an interval of length $W+o(W)$ and are
$O(N)$. Since $G>(d+s+1)L$ and $W\le N$, the only multiple of $b^G$ satisfying
this bound is zero for large $N$. Fixing $d+1$ terminal points shows that every
other terminal point lies on their unique interpolation graph of degree at
most $d$. Applying Cramer's rule to the integer matrix
$(\binom{y_i}{j})_{0\le i,j\le d}$ at $d+1$ such points gives
\eqref{eq:graphden}, since
\[
 \det\left(\binom{y_i}{j}\right)_{0\le i,j\le d}
 =\frac{\prod_{0\le i<j\le d}(y_j-y_i)}{\prod_{j=1}^d j!}
 \ll_d W^s.
\]
\end{proof}

Write
\[
 P_p(x)=\sum_{j=0}^d\vartheta_j\binom xj,\qquad
 A=Q\omega_d,\qquad \mu=\frac{\vartheta_d}{A}.
\]
Appending a common gap $g$ sends the graph to
\[
 P_p^{(g)}(y)=b^gP_p(y-g)-Qw(y),
\]
and therefore
\begin{equation}\label{eq:topmap}
 \mu\longmapsto b^g\mu-1.
\end{equation}

Call a state \emph{interior} if $0<(b-1)\mu<1$, and \emph{strictly
exterior} if $\mu<0$ or $(b-1)\mu>1$.

\begin{lemma}[Interior--exterior dichotomy]\label{lem:dichotomy}
From an interior state at most one gap leads to another interior state.
Every nonrare long window has either an interior segment of span at least
$V_k/4$ or a strictly exterior segment of span at least $V_k/4$ after its
initial prefix.
\end{lemma}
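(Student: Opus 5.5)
The first assertion reduces to the affine dynamics \eqref{eq:topmap}. Let $\mu$ be interior, so $0<\mu<1/(b-1)$. For a gap $g\ge1$ the successor is $\mu'=b^g\mu-1$, and the condition for $\mu'$ to be interior reads
\[
 \frac1{b^g}<\mu<\frac{b}{(b-1)b^g}.
\]
These are open intervals $J_g=(b^{-g},\,b^{1-g}/(b-1))$. The plan is to show that they are pairwise disjoint for distinct $g$. The upper endpoint of $J_{g+1}$ is $b^{-g}/(b-1)$, and for $b\ge2$ this is at most $b^{-g}$, the lower endpoint of $J_g$. Every $J_{g'}$ with $g'>g$ lies inside $(0,b^{-g}/(b-1))$, so it lies to the left of $J_g$. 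Hence a given $\mu$ belongs to at most one $J_g$, which gives at most one admissible interior successor gap. There is equality at the boundary when $b=2$, but the open intervals remain disjoint.

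For the second assertion, I would follow the state $\mu$ along a nonrare long window after its initial prefix. By Lemma~\ref{lem:locking}, every prefix of the continuation word that extends the nonrare prefix and is itself nonrare defines a graph $P$. If the words are handled as continuations of the same set of occurrences, the map \eqref{eq:topmap} applies gap by gap along the remaining $V_k-G\ge\frac34V_k$ of span, by \eqref{eq:prefixspan}. Each step is then classified as interior, strictly exterior, or boundary, where boundary means $\mu\in\{0,1/(b-1)\}$.

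Boundary states cause no trouble. The two values $0$ and $1/(b-1)$ are fixed points only of $\mu\mapsto b\mu-1$ at $\mu=1/(b-1)$, and in general $\mu=1/(b-1)$ is mapped to $(b^g-b+1)/(b-1)$. This is strictly exterior as soon as $g\ge2$. Similarly $\mu=0$ is mapped to $-1$, which is strictly exterior. The strictly exterior region is also forward invariant. If $\mu<0$, then $b^g\mu-1<-1$. If $(b-1)\mu>1$, then $(b-1)(b^g\mu-1)>b^g-b+1\ge1$. Consequently the trajectory is interior on an initial segment, may pass through at most one boundary point (or stay at the fixed point $1/(b-1)$ under repeated gaps $g=1$, which I would group with the interior or treat as a degenerate exterior case), and is strictly exterior from then on. The remaining span of at least $\frac34V_k$ therefore splits into an interior part, at most one boundary gap of size $O(L)$, and an exterior part. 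Since $V_k>C_0L$ and $C_0$ is large, the boundary gap costs at most $V_k/4$. Either the interior part or the exterior part then has span at least $\frac14V_k$, in fact close to $\frac{3}{8}V_k$.

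I expect the main obstacle to be justifying that the map \eqref{eq:topmap} applies along the whole continuation. The locking of Lemma~\ref{lem:locking} holds only for nonrare prefixes, while the continuation of a single window may become a rare word. I would handle this by defining the state along the window formally: start from the graph $P_p$ of the initial nonrare prefix and apply $P\mapsto b^gP(\cdot-g)-Qw$ repeatedly. By \eqref{eq:wordrec}, this transported graph still passes through the actual carry endpoint of this window. The dichotomy is then purely a statement about the orbit of $\mu$ under the affine maps, and it needs no further locking.
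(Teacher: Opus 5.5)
Your proposal follows the paper's own argument. The disjoint intervals $J_g$ are the paper's ``at most one power of $b$''. The forward invariance of the lower and upper exterior regions, the behaviour of the boundary values $0\mapsto-1$ and $1/(b-1)\mapsto(b^g-b+1)/(b-1)$, and the formal transport of the locked prefix graph along the window's own gaps are also what the paper does. So your resolution of the worry about rare continuations is the intended one.

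There is one loose end: the run of unit gaps at the fixed point $\mu=1/(b-1)$. You propose to group this run with the interior or with the exterior. Your final budget then charges only ``at most one boundary gap of size $O(L)$'', so the run's span is never accounted for.

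Neither regrouping is admissible. The lemma is stated for states with $0<(b-1)\mu<1$ and for strictly exterior states, and the later counts rely on that strictness. The exterior count needs $E=(b-1)\mu-1\ge cW^{-s}>0$ at the first strictly exterior state to get growth $b^F$, but on the upper boundary unit gaps keep $E=0$. The interior count needs $0<r_j<q/(b-1)$ with $q>1$, which fails at $\mu=1$ when $b=2$.

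The repair is the paper's one-line estimate. The run consists of at most $m$ gaps, each equal to $1$, so its span is at most $m\le L$. Hence the total span spent on boundary states, including the single exit gap of size $O(L)$, is $O(L)+m=O(L)$. This is at most $V_k/4$ once $C_0$ is large, since $V_k>C_0L$. With this bound inserted, at least $\frac34V_k-\frac14V_k=\frac12V_k$ of span is shared between the interior and the strictly exterior parts, and your conclusion follows.
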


\begin{proof}
The first assertion is equivalent to
\[
 1<b^g\mu<\frac b{b-1},
\]
which admits at most one power of $b$. The lower exterior region is forward
invariant. For $E=(b-1)\mu-1>0$,
\[
 E'=b^gE+(b^g-b)\ge b^gE,
\]
so the upper exterior region is also forward invariant. The lower boundary
exits at the next gap; on the upper boundary only a unit gap remains on the
boundary. The total span spent on boundary states is therefore $O(L)+m$.
In view of~\eqref{eq:prefixspan}, enlarging $C_0$ and then decreasing
$\delta_0$ ensures an exterior span of at least $V_k/4$ whenever the interior
span is less than $V_k/4$.
\end{proof}

Classify a nonrare long window as interior if its interior segment has span
at least $V_k/4$, and as exterior otherwise.

\section{Counting the continuations}

\begin{proposition}[Exterior contribution]\label{prop:exterior}
The total span of the exterior windows is $o(mW)$, uniformly for
$N^\theta\le W\le N$.
\end{proposition}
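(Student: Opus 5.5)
The plan is to show that an exterior window forces its carry orbit to grow too fast for the bound $R_y\le C(y+1)^d$ of Lemma~\ref{lem:carries}, unless its continuation word lies in a set too small to carry mass of order $mW$. The argument runs through the locked graph $P_p$ of Lemma~\ref{lem:locking} and the affine dynamics~\eqref{eq:topmap}.

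Fix a nonrare prefix $p$ with graph $P_p$ and leading state $\mu$. Along any continuation word $(g_1,\dots,g_t)$ the terminal points stay on the iterated graph $P_p^{(g_1\cdots g_t)}$, whose top Newton coefficient is $A\mu_t$, where $\mu_t$ is obtained by iterating $\mu\mapsto b^{g}\mu-1$. In the strictly exterior regime set $E=(b-1)\mu-1$ in the upper region, or $E=-\mu$ in the lower region. The proof of Lemma~\ref{lem:dichotomy} gives $|E_t|\ge b^{G'}|E_0|$, where $G'$ is the exterior span traversed so far.

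The starting size $|E_0|$ is controlled by denominators. By~\eqref{eq:graphden}, $\vartheta_d\in T_p^{-1}\Z$ with $T_p\ll W^s$. Hence $|E_0|\gg 1/(A T_p)\gg W^{-s}$ whenever $E_0\neq0$, and strict exteriority excludes $E_0=0$. After exterior span at least $V_k/4\ge C_0L/4$ the leading coefficient is therefore at least $b^{C_0L/4}W^{-s}\ge N^{C_0/4-s}$. Since $C_0$ is large relative to $\Lambda_{\rm ext}=d+s+2$, this exceeds $N^{\Lambda_{\rm ext}}$.

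Next comes a polynomial sublevel estimate. A rational polynomial of degree $d$ in Newton form with top coefficient of size $H\ge N^{d+s+2}$, evaluated at the terminal coordinate $y\asymp N$, can satisfy $0<P(y)\le C(y+1)^d$ only for $y$ in a set of size $O_d(1)$ per graph, or on a short set. More precisely, $P(y)/\binom yd$ is dominated by $H$ unless lower coefficients cancel it. I would handle this with the divided-difference identity: a $d$-th divided difference over $d+1$ admissible points of the graph equals the top coefficient. It is bounded by $\max|R|\cdot(\text{spacing})^{-d}\ll N^d$ when the points are at least unit-spaced. This contradicts $H\ge N^{d+s+2}$ once at least $d+1$ windows share the same exterior continuation word. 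So each exterior continuation word of span at least $C_0L/4$, taken up to the point where growth exceeds $N^{\Lambda_{\rm ext}}$, occurs at most $d$ times in $I$.

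The remaining step is counting, and it is where the main obstacle lies. Truncate each exterior segment at the first moment its span exceeds $\Lambda_{\rm ext}L+O(L)$. The truncated words have at most $m\le\sqrt{\delta_0}L$ entries and span $O(L)$. By Lemma~\ref{lem:windows} there are at most $N^{\omega(\sqrt{\delta_0})+o(1)}$ of them, and likewise for prefixes by~\eqref{eq:prefixcount}. Each (prefix, truncated word) pair is used by at most $d$ windows, each contributing span $V_k\le m(dL+C)$. The total is therefore $O(N^{2\omega+o(1)}mL)$, which is $o(mW)$ once $\delta_0$ is small enough that $2\omega<\theta$, just as in~\eqref{eq:rare}.

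The delicate point is that the exterior segment need not start at the end of the prefix: boundary and interior states may intervene. The word from the prefix to the start of the exterior segment must then be included in the count. Its length is at most $m$, so it costs at most another factor $N^{\omega}$, still absorbed. One also has to check that the divided-difference bound really uses points that are distinct and $O(W)$-close, which holds because the terminal points are distinct support positions. This gives the uniform $o(mW)$ bound for $N^\theta\le W\le N$.
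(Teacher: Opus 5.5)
There is a genuine gap, and it lies in your last paragraph. Your treatment of the strictly exterior segment is sound and matches the paper in substance. The lattice $\tfrac{1}{AT_p}\Z$ is invariant under $\mu\mapsto b^g\mu-1$, so $|E|\gg W^{-s}$ at the first strictly exterior state. The growth $|E'|\ge b^g|E|$ then makes the leading coefficient $\gg b^FW^{-s}\ge N^{d+2}$ once $F>\Lambda_{\rm ext}L$. Your divided-difference argument (at most $d$ terminal points on such a graph, since $1\le R\ll N^d$ at distinct integer nodes) is a valid substitute for the paper's sublevel estimate.

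The problem is how you count the word joining the end of the prefix to the first strictly exterior state: you charge it ``another factor $N^{\omega}$'' because it has at most $m$ entries. Lemma~\ref{lem:windows} only counts words whose span is at most $C_1L$ with $C_1$ \emph{fixed}. In an exterior window the interior segment is merely required to have span below $V_k/4$, and $V_k$ can be of order $m(dL+C)\asymp\sqrt{\delta}\,L^2$. So the joining word can have span of order $mL$. The a priori number of words with at most $m$ entries and span of order $mL$ is roughly $L^{m-1}$, that is $N^{c\sqrt{\delta}\log L}$ for some $c>0$. For $\delta$ comparable to $\delta_0$ this exceeds every fixed power of $N$. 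It cannot be absorbed against $W\ge N^\theta$, so your final bound $O(N^{3\omega+o(1)}mL)$ does not follow.

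The missing idea is that, once the prefix is fixed, the joining word carries only $O(\log L)$ bits of information.
\begin{itemize}
\item The prefix fixes the locked graph and hence the initial state $\mu$.
\item By the first assertion of Lemma~\ref{lem:dichotomy}, each interior state has at most one interior successor gap. So the interior portion is determined by its number of steps $\nu\le m$.
\item The lower boundary $\mu=0$ goes to $\mu=-1$ under any gap.
\item The upper boundary $\mu=1/(b-1)$ is preserved exactly by unit gaps and is left by any gap $g\ge2$.
\end{itemize}
Neither boundary nor strictly exterior states can return to the interior. Hence the path up to the first strictly exterior state is determined by $\nu$, the exit type, the length $u\le m$ of the unit run, and at most two transition gaps of size at most $dL+C$. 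That gives only $L^{O(1)}=N^{o(1)}$ possibilities, which is the paper's ``record''.

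With this record in place of the joining word, each triple (prefix, record, truncated exterior word) fixes the transformed graph. By your divided-difference bound it therefore serves at most $d$ windows. This gives the required total $N^{\varepsilon_p+\varepsilon_e+o(1)}mL=o(mW)$.
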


\begin{proof}
Since $T_p\ll W^s$ and a fixed multiple of $T_p$ clears both exterior
displacements after every graph transform, at the first strict exterior state
either
$-\mu\ge cW^{-s}$ or $E=(b-1)\mu-1\ge cW^{-s}$. After a fixed
strictly exterior continuation of span $F$,~\eqref{eq:topmap} and the
preceding formula for $E'$ make the absolute leading monomial coefficient of
the transformed graph at least
\[
 cb^FW^{-s}.
\]
If $H$ is a degree-$d$ real polynomial with leading coefficient $a_d\ne0$,
then factorization over $\mathbb C$ gives
\begin{equation}\label{eq:sublevel}
 \#\{n\in J\cap\Z:|H(n)|\le Y\}
 \le d+2d(Y/|a_d|)^{1/d}.
\end{equation}
Actual carry values are $O(N^d)$, so a fixed exterior word of span $F$
leaves at most
\[
 O_d\left(1+\left(\frac{N^dW^s}{b^F}\right)^{1/d}\right)
\]
possible terminal coordinates.

Fix a repeated prefix and its locked graph. For each occurrence record
\begin{enumerate}
\item the number $\nu\le m$ of successive gaps for which the state remains
interior;
\item whether the first noninterior state is lower exterior, upper exterior,
the lower boundary, or the upper boundary;
\item the length $u\le m$ of a possible unit-gap run on the upper boundary;
and
\item up to two transition gaps: the gap by which the interior is left, if
present, and the gap by which a boundary state is left.
\end{enumerate}
The two gaps in the last item are at most $dL+C$. Thus the number of records is
at most $O_d((m+1)^2(L+1)^2)=L^{O_d(1)}$. The initial state is fixed by the
prefix graph. Interior uniqueness determines its first $\nu$ gaps, and the
record then determines the continuation up to the first strictly exterior
state.

Starting there, select the shortest word whose span $F$ exceeds
$\Lambda_{\rm ext}L$. Its last gap is at most $dL+C$, and hence
\[
 \Lambda_{\rm ext}L<F\le(\Lambda_{\rm ext}+d)L+C.
\]
The exterior segment has span at least $V_k/4$, so this word is present once
$C_0$ is large enough. Lemma~\ref{lem:windows} gives at
most $N^{\varepsilon_e+o(1)}$ selected words, where
\[
 \varepsilon_e=
 \omega_{\Lambda_{\rm ext}+d+1}(\sqrt{\delta_0})
 \longrightarrow0\qquad(\delta_0\downarrow0).
\]
For a fixed prefix, record, and selected word, the transformed graph is fixed.
Since $F>\Lambda_{\rm ext}L$ and $W\le N$, the sublevel bound above leaves
$O(1)$ possible terminal coordinates. The code consisting of the prefix, the
record, the selected word, and that coordinate is injective: subtracting the
recorded continuation span and the prefix span from the terminal coordinate
recovers the window start. Hence, using $V_k\ll mL$ and absorbing the record
count into $N^{o(1)}$,
\[
 \sum_{k\ {\rm exterior}}V_k
 \ll N^{\varepsilon_p+\varepsilon_e+o(1)}mL=o(mW)
\]
after choosing $\varepsilon_p+\varepsilon_e<\theta$.
\end{proof}

Consider an interior window. At the beginning of its interior segment the
reduced denominator of $\mu$ is $O(W^s)$ by~\eqref{eq:graphden}. Let $q_b$
be its $b$-primary part and $t$ the least integer such that $q_b\mid b^t$.
We have
\[
 t=O_b(\log q_b)=O_{b,d}(L).
\]
The interior segment has span at least $V_k/4>C_0L/4$. Hence, after fixing
$C_0$ sufficiently large, it contains a shortest initial segment of span
$h\ge t$. Its last gap is at most $dL+C$, so $h=O_{b,d}(L)$.

Write the initial state in lowest terms as $\mu=a/(q_bq)$, where $(q,b)=1$.
Iteration of~\eqref{eq:topmap} gives
\[
 \mu'=\frac{(b^h/q_b)a-Mq}{q}
\]
for some $M\in\Z$. Both $a$ and $b^h/q_b$ are coprime to $q$, so this fraction
is reduced. The same calculation at every later interior step shows that the
coprime denominator $q$ is preserved. Moreover, $q>1$, since the interior
interval contains no integer. The removed span is $O_{b,d}(L)$; increasing
$C_0$ once more leaves a segment of span
\[
 G\ge V_k/8.
\]
This segment contains $n\ge1$ gaps.
Writing its states as $\mu_j=r_j/q$ gives
\begin{equation}\label{eq:denrec}
 0<r_j<\frac q{b-1},\qquad
 b^{g_j}r_{j-1}=q+r_j.
\end{equation}
With $n\le m$ and mean gap $z=G/n$,
~\eqref{eq:denrec} and $r_{j-1}\ge1$ give $b^{g_j}\le C_bq$.
Multiplication over the $n$ gaps yields $b^G\le(C_bq)^n$. Together with
$G\ge V_k/8$, this gives
\begin{equation}\label{eq:qz}
 q\gg_b b^z,\qquad g_j\le\log_b(C_bq),\qquad
 z\ge\frac{V_k}{8m}\gg\frac{C_0}{\sqrt{\delta_0}}.
\end{equation}

Choose dyadic $D,Z$ with $D\le q<2D$ and $Z\le z<2Z$. Then
$D\gg_b b^Z$, and the least admissible band satisfies
\begin{equation}\label{eq:zmin}
 Z_{\min}(\delta_0)\gg_{b,d}\frac{C_0}{\sqrt{\delta_0}}.
\end{equation}
Put $\ell_D=\lceil\log_b(C_b'D)\rceil$, with $C_b'$ sufficiently large. Since
$q\ll W^s$,
\begin{equation}\label{eq:ellbound}
 \ell_D\le sL+O(1).
\end{equation}
Partition the remaining segment greedily into completed blocks whose span
first reaches $3\ell_D$. Thus every completed block has span $h$ satisfying
\begin{equation}\label{eq:blockspan}
 3\ell_D\le h\le4\ell_D.
\end{equation}
Discard the last incomplete block, every block whose endpoint has less than
$(\Lambda_{\rm cont}+s+2)L$ of interior continuation, and every block with
$r>4h/Z$ gaps. The incomplete block has span below $3\ell_D$. The blocks
failing the continuation condition lie in a terminal suffix of span at most
$(\Lambda_{\rm cont}+s+2)L+4\ell_D$. By~\eqref{eq:ellbound}, these two losses
together are $O_{b,d}(L)$ and hence at most $G/4$ after $C_0$ is enlarged.
For every block discarded by the last condition, $h<(Z/4)r$. The blocks are
disjoint, so their total span is below
\[
 \frac Z4\sum r\le\frac Z4n\le\frac G4,
\]
where the last inequality uses $Z\le z=G/n$. Thus the retained blocks have
total span at least $G/2$. Since $G\ge V_k/8$, this gives
\begin{equation}\label{eq:cover}
 V_k\ll_{b,d}\sum_{B\ {\rm retained\ in}\ k}\Span(B).
\end{equation}

\begin{lemma}[Interior block words]\label{lem:blocks}
Let $\mathcal N(D,Z)$ be the number of retained block words in the fixed
bands $(D,Z)$.
For every sufficiently large fixed $Z_0$,
\begin{equation}\label{eq:blockentropy}
 \mathcal N(D,Z)\ell_D\ll D^{\tau(Z_0)}
 \qquad(Z\ge Z_0),
\end{equation}
where $\tau(Z_0)\to0$ as $Z_0\to\infty$. A retained block word and its
denominator band determine the exact highest-coefficient state at both block
endpoints.
\end{lemma}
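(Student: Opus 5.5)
The plan is to prove the two assertions separately. The determination statement comes from a Diophantine gap argument for fractions with denominator in the band $[D,2D)$. The entropy bound \eqref{eq:blockentropy} is a direct count of compositions with few parts, using only \eqref{eq:blockspan} and the retention condition $r\le 4h/Z$.

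\emph{Determination of the state.} Fix a retained block word $(g_1,\ldots,g_r)$ with partial spans $G_j$ and total span $h$. Iterating \eqref{eq:topmap} along the block gives
\[
 \mu_{\rm end}=b^h\mu_{\rm start}-c,\qquad c=\sum_{j=1}^r b^{h-G_j}\in\Z,
\]
so $c$ is determined by the word alone. Both endpoint states are interior, since retained blocks lie inside the interior segment. Hence $0<\mu_{\rm end}<1/(b-1)$, and therefore
\[
 \mu_{\rm start}\in\Bigl(\frac{c}{b^h},\ \frac{c+1/(b-1)}{b^h}\Bigr).
\]
This is an interval of length $\ll_b b^{-h}\le b^{-3\ell_D}\le (C_b'D)^{-3}$.

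By the denominator preservation established before \eqref{eq:denrec}, every interior state on the segment has the form $r_j/q$ with the same $q\in[D,2D)$. Two distinct such fractions (even with different denominators $q,q'<2D$) differ by at least $1/(4D^2)$. Choosing $C_b'$ large enough makes $(C_b'D)^{-3}<1/(4D^2)$. Hence at most one admissible fraction lies in the interval, so the word and the band determine $\mu_{\rm start}$ exactly, and then $\mu_{\rm end}$ via the displayed relation. The only care needed is that the reduced denominator of the states in this segment is exactly $q$, with no $b$-part. This holds because the initial segment of span $\ge t$ was removed precisely to kill $q_b$.

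\emph{Entropy count.} A retained word is a composition of some $h\in[3\ell_D,4\ell_D]$ into $r\le 4h/Z$ positive parts. The number of such compositions is at most
\[
 \sum_{r\le 4h/Z}\binom{h-1}{r-1}\le \exp\bigl(h\,H(4/Z)+O(\log h)\bigr),
\]
where $H$ is the binary entropy function; this is valid once $Z_0>8$, so that $4/Z\le 1/2$. Summing over the $O(\ell_D)$ values of $h$ and multiplying by $\ell_D$ gives
\[
 \mathcal N(D,Z)\,\ell_D\ll \ell_D^{O(1)}\exp\bigl(4\ell_D H(4/Z_0)\bigr).
\]
Since $\ell_D=\log_b D+O_b(1)$, this is $\ll D^{4H(4/Z_0)/\log b+o(1)}$, and the polylogarithmic factors $\ell_D^{O(1)}$ are absorbed into an arbitrarily small power of $D$. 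Since $D\gg b^{Z}\ge b^{Z_0}$ by \eqref{eq:qz}, those factors are bounded by $D^{1/Z_0}$ once $Z_0$ is large. Thus $\tau(Z_0)=4H(4/Z_0)/\log b+1/Z_0\to0$.

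\emph{Main obstacle.} The delicate point is the first assertion. It must be checked that the denominator of every state encountered along the block is exactly the preserved coprime part $q$, rather than merely a divisor of something of size $O(W^s)$. Otherwise the spacing bound $1/(4D^2)$ for the admissible fractions fails. I would verify this directly from the reduced-fraction computation preceding \eqref{eq:denrec}, and then fix $C_b'$ (hence $\ell_D$) so that $b^{-3\ell_D}/(b-1)<1/(4D^2)$ holds uniformly in $D$.
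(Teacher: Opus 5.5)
Your proposal is correct and is essentially the paper's proof. It uses the same count of compositions of $h\in[3\ell_D,4\ell_D]$ into at most $4h/Z$ parts (your binary-entropy exponent $H(4/Z_0)$ is equivalent to the paper's $\log(2Z_0)/Z_0$), and the same observation that $\mu_{\rm start}$ lies in an interval of length $((b-1)b^h)^{-1}<1/(4D^2)$, which contains at most one fraction with denominator below $2D$.

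One sentence needs repair. The factor $\ell_D^{O(1)}$ is not bounded by $D^{1/Z_0}$ near the bottom of the band ($D\approx b^{Z_0}$, where $D^{1/Z_0}\approx b$ but $\ell_D^{O(1)}\approx Z_0^{O(1)}$). Either let the implied constant depend on the fixed $Z_0$, or enlarge the exponent to order $\log Z_0/Z_0$, as the paper's choice $\tau(Z_0)=C_b\sup_{u\ge Z_0}\log(2u)/u$ does.
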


\begin{proof}
By construction, $3\ell_D\le h\le4\ell_D$ and
$1\le r\le16\ell_D/Z$. For fixed $h$ and $r$, there are
$\binom{h-1}{r-1}$ positive compositions. Summing first over $r$ and then over
$h$ gives, for sufficiently large $Z_0$,
\[
 \mathcal N(D,Z)
 \le (4\ell_D+1)^2
 \exp\!\left(C\ell_D\frac{\log(2Z)}Z\right),
\]
uniformly for $D\gg_b b^Z$. Here the square factor accounts for the choices of
$h$ and $r$. We have $\ell_D\asymp_b\log D$ and $\ell_D\gg_b Z$ in these
bands. Enlarging the constant to absorb the polynomial factor, the right-hand
side is at most $D^{\tau(Z_0)}/\ell_D$ for $Z\ge Z_0$, where one may take
\[
 \tau(Z_0)=C_b\sup_{u\ge Z_0}\frac{\log(2u)}u.
\]
This proves~\eqref{eq:blockentropy}, and $\tau(Z_0)\to0$.
For a word of span $h$, iteration of~\eqref{eq:topmap} gives
\[
 \mu_{\rm end}=b^h\mu_{\rm start}-M_B,\qquad M_B\in\Z.
\]
Both endpoints are interior, and the denominator preserved along the block
lies in $[D,2D)$. Thus $\mu_{\rm start}$ lies in an interval of length
$((b-1)b^h)^{-1}$. Since $h\ge3\ell_D$, the choice of $C_b'$ makes this length
smaller than $1/(4D^2)$. Distinct reduced fractions with denominators below
$2D$ are separated by at least $1/(4D^2)$. There is therefore at most one
possible $\mu_{\rm start}$, and the displayed affine relation determines
$\mu_{\rm end}$.
\end{proof}

\begin{lemma}[Sampling and integral fibres]\label{lem:sampling}
Let $J$ be an interval of length $H$.
\begin{enumerate}
\item If a polynomial $f$ of degree at most $d$ satisfies $|f(x)|\le Y$ at
$U\ge2d+1$ distinct integers of $J$, then
\[
 \sup_J|f|\ll_d Y(1+H/U)^d.
\]
\item If $P\in\Q[x]$ has degree $d$ and leading monomial coefficient of
reduced denominator $q_0$, then, with $\sigma=1/s$,
\begin{equation}\label{eq:fibre}
 \#\{x\in J\cap\Z:P(x)\in\Z\}\le d+dHq_0^{-\sigma}.
\end{equation}
\end{enumerate}
\end{lemma}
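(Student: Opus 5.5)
For (1) I would use Lagrange interpolation at well-spaced nodes chosen among the $U$ given integers. Order them as $u_1<\dots<u_U$ and set $k=\lfloor (U-1)/d\rfloor$. The hypothesis $U\ge2d+1$ gives $k\ge2$, and also $k\gg_d U$.

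Take the nodes $x_i=u_{1+ik}$ for $0\le i\le d$. These are distinct integers whose indices differ by at least $k$, so $|x_i-x_j|\ge k|i-j|\gg_d U$ for $i\ne j$. Since $\deg f\le d$, Lagrange interpolation gives
\[
 f(x)=\sum_{i=0}^d f(x_i)\prod_{j\ne i}\frac{x-x_j}{x_i-x_j}.
\]
For $x\in J$ each numerator factor is at most $H$, and each denominator factor is $\gg_d U$. Hence
\[
 |f(x)|\ll_d Y\,(H/U)^d\le Y(1+H/U)^d.
\]
This step is routine; the only care needed is that the spacing constant depends only on $d$.

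For (2) the key observation is arithmetic. Suppose $x_0<\dots<x_d$ are integers in $J$ with $P(x_i)\in\Z$. Because $\deg P=d$ exactly, the leading monomial coefficient equals the top divided difference:
\[
 \mathrm{lc}(P)=P[x_0,\dots,x_d]=\sum_{i=0}^d\frac{P(x_i)}{\prod_{j\ne i}(x_i-x_j)}.
\]
Each denominator $\prod_{j\ne i}|x_i-x_j|$ divides $\Delta=\prod_{i<j}(x_j-x_i)$. Hence $\Delta\cdot\mathrm{lc}(P)\in\Z$, so $q_0\mid\Delta$. Every factor of $\Delta$ is at most $x_d-x_0$, and $\Delta$ has exactly $s=d(d+1)/2$ factors, so
\[
 q_0\le\Delta\le(x_d-x_0)^s.
\]
Thus any $d+1$ such integers have span $x_d-x_0\ge q_0^{\sigma}$.

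Now count. Let $y_0<\dots<y_{K-1}$ be all integers of $J$ with $P(y)\in\Z$. I would group them into consecutive blocks $\{y_{ld},\dots,y_{(l+1)d}\}$ that share endpoints. There are $\lfloor (K-1)/d\rfloor\ge (K-d)/d$ complete blocks. Their spans $y_{(l+1)d}-y_{ld}$ telescope, so their sum is at most $H$, and each span is at least $q_0^\sigma$. Hence
\[
 \frac{K-d}{d}\,q_0^\sigma\le H,\qquad\text{that is,}\qquad K\le d+dHq_0^{-\sigma}.
\]
This is exactly \eqref{eq:fibre}.

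The step most likely to need care is the denominator divisibility: one must use the \emph{monomial} leading coefficient, not the binomial-basis coefficient $\vartheta_d$. The two differ by the factor $d!$, and the statement's $q_0$ refers to the monomial coefficient. The shared-endpoint grouping is what produces the sharp constants $d$ and $d$ rather than $d+1$. If that bookkeeping fails, I would fall back to disjoint blocks, which give the bound with constants $O_d(1)$, and that suffices for the later applications.
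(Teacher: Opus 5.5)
Your proposal is correct and follows the paper's proof. For (1) both use Lagrange interpolation at $d+1$ rank-spaced nodes with mutual separation $\gg_d U$; for (2) both use that $q_0$ divides the Vandermonde product of any $d+1$ fibre points, combined with a span count. The only difference is the bookkeeping: the paper averages over the $K-d$ sliding $(d+1)$-tuples (each adjacent gap counted at most $d$ times) to find one tuple of span at most $dH/(K-d)$, whereas you bound every tuple's span below by $q_0^{\sigma}$ and telescope over a chain of blocks sharing endpoints. Both give exactly $K-d\le dHq_0^{-\sigma}$.
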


\begin{proof}
For (1), write the sample points in increasing order and select $d+1$ of them
at ranks nearest to $j(U-1)/d$, $0\le j\le d$. Because distinct integral
sample points are separated by at least their rank difference, the selected
coordinates are mutually separated by $\gg_d U$. Lagrange interpolation on
these $d+1$ nodes gives, for $z\in J$,
\[
 |f(z)|\ll_d Y\left(1+\frac HU\right)^d.
\]

For (2), suppose that the fibre consists of
$x_1<\cdots<x_K$ with $K>d$. The $K-d$ spans
$x_{i+d}-x_i$ have total at most $dH$, since each adjacent gap is counted at
most $d$ times. Hence some $d+1$ consecutive fibre points have total span at
most $dH/(K-d)$. Interpolating their integer values shows that $q_0$ divides
their Vandermonde product. Its absolute value is at most
$(dH/(K-d))^s$, and therefore
\[
 K-d\le dHq_0^{-1/s}.
\]
This is~\eqref{eq:fibre}.
\end{proof}

\begin{lemma}[High-frequency coalescence]\label{lem:coalescence}
Fix a retained block word and dyadic bands $(D,Z)$. Among the exact block-end
carry graphs realized on at least $W^{1/2}$ distinct endpoints, at most one
occurs.
\end{lemma}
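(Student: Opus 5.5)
We argue by contradiction, in the spirit of Lemma~\ref{lem:locking} but with the coalescence produced by a common continuation rather than by a long prefix. Fix the retained block word $B$ and its bands. Suppose two distinct block-end carry graphs $P_1\ne P_2$ each occur at least $W^{1/2}$ times. By Lemma~\ref{lem:blocks}, the word and its band determine the exact highest-coefficient state at the block end. Hence $P_1$ and $P_2$ have the same normalized leading coefficient $\mu_{\rm end}$, and $\Delta=P_1-P_2$ has degree at most $d-1$.

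The next step uses a common continuation. By the retention rule, every endpoint is followed by at least $(\Lambda_{\rm cont}+s+2)L$ of interior continuation. By interior uniqueness (Lemma~\ref{lem:dichotomy}), the gap sequence from an interior state is forced. So all occurrences of $B$ share the same continuation word for as long as they stay interior, since the starting $\mu$ is common. Append the shortest initial continuation word $c$ whose span $F$ exceeds $(\Lambda_{\rm cont}+s+1)L$. Relation~\eqref{eq:wordrec} for $c$ shows that the transformed graphs satisfy $P_i^{(c)}(y)=b^FP_i(y-F)-QF_c(y)$. Their difference is therefore
\[
 b^F\Delta(y-F),
\]
and it takes integer values at every terminal point. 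We expect this step to be the main obstacle: we must verify that the common continuation really is available with span $F\gg L$ for both families simultaneously, which uses exactly the margin built into the discard rule.

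Next we show that the values of $b^F\Delta$ are small. Each family has at least $W^{1/2}$ endpoints in an interval $J$ of length $W+o(W)$. On these endpoints the actual carries satisfy $R\ll N^d$ (Lemma~\ref{lem:carries}), so $|P_i|\ll N^d$ there. Lemma~\ref{lem:sampling}(1), applied with $U=W^{1/2}\ge 2d+1$ and $H\ll W$, gives
\[
 \sup_J|P_i|\ll N^dW^{d/2}.
\]
Consequently $\sup_J|\Delta|\ll N^{d}W^{d/2}$ as well.

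We then compare this with the denominators of $\Delta$. By~\eqref{eq:graphden}, propagated through the block (whose transform only multiplies by a power of $b$ and subtracts an integer-valued polynomial), each $P_i$ has some $T_i\ll W^s$ with $T_iP_i\in\Int_d(\Z)$. Hence $T_1T_2\Delta\in\Int_{d-1}(\Z)$. Now suppose $\Delta\ne0$. Some coefficient $\vartheta_j$ of $\Delta$ in the binomial basis is nonzero, and it satisfies $|\vartheta_j|\ge (T_1T_2)^{-1}\gg W^{-2s}$. Recovering binomial coefficients from values on $d$ consecutive integers in $J$ by finite differences gives the reverse bound
\[
 |\vartheta_j|\ll \sup_J|\Delta|\ll N^{d}W^{d/2}.
\]
Combining the two, $\Delta$ is a nonzero polynomial of height $\ll N^dW^{d/2}$ with denominator $\ll W^{2s}$.

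Finally we derive the contradiction. The continued difference $b^F\Delta(y-F)$ is integer-valued at the $\ge W^{1/2}$ common terminal points, since each is a difference of actual carries after we pair endpoints by translating. Alternatively, we apply the determinant argument of Lemma~\ref{lem:locking} directly to the $d+1$-row integer determinant built from the columns $\binom yj$ together with points of both families. That determinant is divisible by $b^F$ and bounded by $\ll N^dW^{s+d/2}\cdot W^{2s}$, which is below $b^F$ once $F>(\Lambda_{\rm cont}+s+1)L$, because $\Lambda_{\rm cont}=2s+\tfrac{3d}{2}+2$ absorbs the exponents $d+2s+d/2$. The determinant therefore vanishes, and so both families lie on one degree-$\le d$ graph. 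This contradicts $P_1\ne P_2$.
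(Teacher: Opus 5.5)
As written, your main line has a genuine gap: it never produces a bound that beats the factor $b^F$. Write $E_1,E_2$ for the two endpoint sets. The smallness has to come from the \emph{transformed} graphs. The graph $P_i^{(c)}$ takes the actual carry values $R_{y+F}=O(N^d)$ on $E_i+F$, so Lemma~\ref{lem:sampling}(1) applied there gives $|P_1^{(c)}-P_2^{(c)}|\ll N^dW^{d/2}$ on the hull $J$ of $(E_1\cup E_2)+F$. Since $P_1^{(c)}-P_2^{(c)}=b^F\Delta(\cdot-F)$ and $T_1T_2\Delta\in\Int_d(\Z)$, the polynomial $T_1T_2\,b^F\Delta(\cdot-F)$ lies in $b^F\Int_d(\Z)$. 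Its absolute value on $J$ is $\ll N^{2s+3d/2}<b^F$, so it vanishes at every integer of $J$, and $\Delta=0$. This is the paper's proof.

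You instead sample the untransformed $P_i$ at the block endpoints. That yields only $|\Delta|\ll N^dW^{d/2}$, i.e.\ $|b^F\Delta(\cdot-F)|\ll b^FN^dW^{d/2}$, which is useless against divisibility by $b^F$. Your finite-difference paragraph gives $W^{-2s}\ll|\vartheta_j|\ll N^dW^{d/2}$, and these bounds are compatible, so they contradict nothing. Your first closing claim is also unfounded. The sets $E_1+F$ and $E_2+F$ are different, so there are no common terminal points. A difference $R_{y+F}-R_{y'+F}$ with $y\in E_1$, $y'\in E_2$ is not a value of $b^F\Delta(\cdot-F)$.

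Your ``alternatively'' sentence, by contrast, points to an argument that works and is leaner than the paper's, but it must be set up correctly.
\begin{itemize}
\item Take $d+2$ rows $\bigl(1,\binom{y+F}1,\dots,\binom{y+F}d,R_{y+F}\bigr)$ with distinct $y\in E_1\cup E_2$, not $d+1$ rows.
\item Because $c$ is common to both families, the single column operation by $QF_c=\sum_jc_j\binom{\cdot}{j}$ turns the last column into $b^FR_y$, exactly as in Lemma~\ref{lem:locking}.
\item Expanding in the original last column bounds the determinant by $\ll_dN^dW^s$; no sampling factor and no denominator $T$ enter. Hence the determinant vanishes as soon as $F>(d+s+1)L$.
\item Fixing $d+1$ points of $E_1$ then places all of $(E_1\cup E_2)+F$ on $P_1^{(c)}$. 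Since $|E_2|\ge d+1$, this gives $P_2^{(c)}=P_1^{(c)}$.
\item Injectivity of $P\mapsto b^FP(\cdot-F)-QF_c$ then gives $P_1=P_2$. You should state this last step explicitly.
\end{itemize}

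Compared with the paper, this route needs only $d+1$ endpoints per graph instead of $W^{1/2}$. It also needs only a continuation of span above $(d+s+1)L$ rather than $\Lambda_{\rm cont}L$. The paper instead reuses~\eqref{eq:graphden} and Lemma~\ref{lem:sampling}, which is why it needs $\Lambda_{\rm cont}=2s+3d/2+2$.

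Finally, the step you flag as the main obstacle is routine. The retained interior continuation has span at least $(\Lambda_{\rm cont}+s+2)L$, so the shortest prefix exceeding your threshold $(\Lambda_{\rm cont}+s+1)L$ lies inside it. Interior uniqueness then makes that prefix common to all endpoints of both graphs.
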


\begin{proof}
Let $C,C'$ be two such graphs. By~\eqref{eq:graphden}, there is an integer
$T\ll W^{2s}$ such that $T(C-C')\in\Int_d(\Z)$. Lemma~\ref{lem:blocks}
determines the same exact highest state for $C$ and $C'$. As long as the two
continuations remain interior, each next gap is therefore the unique
interior successor and is common to both graphs. Every such gap is at most
$\ell_D$ by~\eqref{eq:qz} and the definition of $\ell_D$. Take the shortest
common forward word whose span $F$ exceeds $\Lambda_{\rm cont}L$. Then
\[
 \Lambda_{\rm cont}L<F\le\Lambda_{\rm cont}L+\ell_D.
\]
The retention condition gives each endpoint at least
$(\Lambda_{\rm cont}+s+2)L$ of interior continuation, so
\eqref{eq:ellbound} shows that this word is available on both sides.

Let $\widetilde C,\widetilde C'$ be the transformed graphs. Their polynomial
corrections are identical and cancel, whence
\[
 T(\widetilde C-\widetilde C')(x)
 =b^F\bigl[T(C-C')\bigr](x-F).
\]
Translation preserves integer-valued polynomials, and thus
\begin{equation}\label{eq:divisiblegraphs}
 T(\widetilde C-\widetilde C')\in b^F\Int_d(\Z).
\end{equation}

Translate each of the two endpoint sets by $F$, and let $J$ be the convex
hull of their union. Lemma~\ref{lem:localscale} and $F=O(L)$ give
$|J|=O(W)$. Each translated set still has at least $W^{1/2}$ elements, and on
it the corresponding transformed graph takes actual carry values of size
$O(N^d)$. Applying Lemma~\ref{lem:sampling} separately and then using
$T\ll W^{2s}$ yields
\[
 \sup_J|T(\widetilde C-\widetilde C')|
 \ll W^{2s}N^d(1+W^{1/2})^d
 \ll N^{2s+3d/2}.
\]
Since $\Lambda_{\rm cont}=2s+3d/2+2$ and
$F>\Lambda_{\rm cont}L$, the last quantity is smaller than $b^F$ for all
sufficiently large $N$. At every integer of $J$,
\eqref{eq:divisiblegraphs} makes the value a multiple of $b^F$; it must
therefore vanish. The interval contains at least $d+1$ integers for large
$N$, so the polynomial vanishes identically. Finally, the graph transform is
invertible, and hence $C=C'$.
\end{proof}

\begin{proposition}[Interior contribution]\label{prop:interior}
There is a function $\varepsilon_{\rm int}(t)\to0$ as $t\downarrow0$ such
that, uniformly for $N^\theta\le W\le N$, the total span of all interior
windows is at most
$\bigl(\varepsilon_{\rm int}(\delta_0)+o(1)\bigr)mW$.
\end{proposition}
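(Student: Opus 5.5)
By \eqref{eq:cover}, the total span of the interior windows is controlled by the total span of the retained blocks they contain. So the plan is to bound, for each pair of dyadic bands $(D,Z)$, the number of retained block occurrences, multiply by the block span $\le4\ell_D$, and sum over bands.

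\emph{Reduction to counting block occurrences.} Each retained block occurrence $B$ lies in a window $k$ with starting point $a_k\in I$. Its endpoint lies in $(N,N+W+o(W)]$ by Lemma~\ref{lem:localscale}. A single endpoint coordinate is covered by at most $m$ windows, since consecutive windows shift by one support point. Hence
\[
 \sum_{k\ {\rm interior}}V_k\ll m\sum_{(D,Z)}\ \sum_{\text{words}}\ \#\{\text{distinct endpoints}\}\cdot 4\ell_D .
\]
Fix a band pair $(D,Z)$ and a retained block word. Lemma~\ref{lem:blocks} fixes the exact leading state. Every endpoint lies on some exact block-end carry graph, namely the graph locked by the prefix and transported along the window. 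Split these graphs into those realized on at least $W^{1/2}$ endpoints and the rest.
\begin{enumerate}
\item By Lemma~\ref{lem:coalescence} there is at most one high-frequency graph. Its leading monomial coefficient is $A\mu/d!$, whose coprime denominator is $q\ge D$, so $q_0\gg D$. Apply the integral fibre bound \eqref{eq:fibre} on an interval of length $O(W)$. Since the values are integer carries, this graph carries at most $d+O(WD^{-\sigma})$ endpoints.
\item Each low-frequency graph carries fewer than $W^{1/2}$ endpoints. The graphs are indexed by prefix, record-type data (as in Proposition~\ref{prop:exterior}) and block position. I plan to bound their number by $N^{\varepsilon_p+o(1)}$ times the number of block words, so they contribute $W^{1/2}N^{\varepsilon_p+o(1)}\mathcal N(D,Z)\ell_D$. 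After choosing $\varepsilon_p<\theta/4$, this is $o(W)$ per band.
\end{enumerate}

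\emph{Summation.} The high-frequency part gives
\[
 m\sum_{D,Z}\mathcal N(D,Z)\ell_D\bigl(d+WD^{-\sigma}\bigr)\ll m\sum_{D,Z}\bigl(D^{\tau}+WD^{\tau-\sigma}\bigr),
\]
using \eqref{eq:blockentropy}. We have $Z\ge Z_{\min}(\delta_0)$ by \eqref{eq:zmin}. Therefore $\tau=\tau(Z_{\min}(\delta_0))\to0$ as $\delta_0\downarrow0$, and also $D\gg b^{Z}$. Taking $\tau<\sigma/2$, the dyadic sum over $D\ge b^{Z_{\min}}$ is $\ll W b^{-Z_{\min}\sigma/2}$. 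The sum over $Z$ adds only a logarithmic factor in $L$, which the geometric decay in $D$ absorbs. The terms $D^\tau\le W^{s\tau}$ are $o(W)$. This gives $\varepsilon_{\rm int}(\delta_0)\asymp b^{-\sigma Z_{\min}(\delta_0)/2}\to0$, and the number of bands is $O(L^2)=N^{o(1)}$.

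\emph{Main obstacle.} The low-frequency count is the hard step. The difficulty is that the number of distinct block-end graphs must be shown to be $N^{o(1)}$ times the number of block words. I would argue that a graph is determined by the nonrare prefix, the record leading to the interior segment, the removed initial segment of span $O(L)$, and the interior gaps up to the block. The interior gaps are forced by uniqueness of interior successors from the known state, except for the data recorded when denominators are normalized. All of this contributes at most $N^{\varepsilon_p+\omega(\sqrt{\delta_0})+o(1)}$ choices, which is admissible after shrinking $\delta_0$.
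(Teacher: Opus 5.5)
Your proposal is correct and follows essentially the paper's route: you pass to retained blocks via \eqref{eq:cover} with endpoint multiplicity at most $m+1$, bound the single high-frequency graph per word and band by Lemma~\ref{lem:coalescence} together with the fibre bound \eqref{eq:fibre}, count low-frequency graphs through the prefix and block position, and sum the dyadic bands geometrically with $\tau<\sigma/2$. The record and removed-segment data in your low-frequency count are redundant, because states that leave the interior never return to it, so the interior segment starts right after the prefix and is fixed by its locked state; the paper packages this as cells (prefix, block position, $Z$-band), each of which determines the block word and block-end graph, and there are $N^{\varepsilon_p+o(1)}m$ of them.
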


\begin{proof}
For a fixed nonrare prefix, all occurrences remaining interior follow one
gap sequence, since they begin on the same locked graph and every interior
state has at most one interior successor. More precisely, index the gaps in
this common interior continuation by their position after the prefix. A
\emph{cell} consists of the prefix, one of its at most $m$ completed-block
positions, and the dyadic $Z$-band of the ambient window. The prefix and
position determine the exact highest state, hence its reduced denominator,
its band $D$, and the next greedy block word. Transforming the locked prefix
graph along the common gaps also determines the full block-end graph within
the cell. Finally,~\eqref{eq:qz} and $q\ll W^s$ give $Z=O(L)$, so there are
only $O(\log L)$ dyadic $Z$-bands. Consequently,
\begin{equation}\label{eq:cellcount}
 \#\{\text{cells}\}\le N^{\varepsilon_p+o(1)}m.
\end{equation}

For a retained window--block pair $(k,B)$, let $a_i$ be its block endpoint
and put $j=i-k$. The map
\begin{equation}\label{eq:sourcemap}
 (k,B)\longmapsto(a_i,j),\qquad 0\le j\le m,
\end{equation}
is injective: $a_i$ determines $i$, then $j$ determines $k$, and the
canonical partition of that window recovers $B$. Here $i$ is the index in the
fixed global increasing enumeration of $S$, so the first implication uses its
injectivity. Call a cell low frequency
if its endpoint set has fewer than $W^{1/2}$ elements. By
~\eqref{eq:sourcemap}, it contains at most $(m+1)W^{1/2}$ pairs. Since every
retained block has span $O(L)$,~\eqref{eq:cellcount} gives
$\sum_{(k,B)\ {\rm low}}\Span(B)
\ll N^{\varepsilon_p+o(1)}m^2LW^{1/2}=o(mW)$, provided
$\varepsilon_p<\theta/2$; after division by $mW$, the right-hand side is at
most $N^{\varepsilon_p-\theta/2+o(1)}L^2$.

For a high-frequency cell, the highest Newton coefficient at a block endpoint
is $Ar/q$, where $q\asymp D$ and $(r,q)=1$. Its leading monomial coefficient
is $Ar/(qd!)$. If $q_0$ is the reduced denominator of this coefficient, then
\[
 q_0=\frac{qd!}{\gcd(Ar,qd!)}
 \ge\frac q{\gcd(A,q)}
 \ge\frac q{|A|}
 \gg_A D.
\]
Lemma~\ref{lem:sampling} gives $K\ll_d1+WD^{-\sigma}$ for the number $K$ of
endpoints on this graph. Since $K\ge W^{1/2}>2d$ for large $N$, the constant
term can be absorbed and $K\ll_dWD^{-\sigma}$. For fixed bands and a block
word, Lemma~\ref{lem:coalescence} leaves at most one high-frequency graph. By
\eqref{eq:sourcemap}, each endpoint has at most $m+1$ preimages, while each
block has span at most $4\ell_D$. Thus the word contributes
$O(mWD^{-\sigma}\ell_D)$. Summing over the $\mathcal N(D,Z)$ words and using
~\eqref{eq:blockentropy} gives
$\sum_{(k,B)\text{ in }(D,Z)}\Span(B)
\ll mW D^{\tau(Z_0)-\sigma}$.
Choose $Z_0$ so that $\tau(Z_0)<\sigma/2$. Since $D\gg_b b^Z$, the sum over
dyadic $D$ is geometric; extending the finite range of realized bands to
infinity only enlarges it. A second geometric sum gives
\[
 \sum_{\substack{Z\ge Z_{\min}(\delta_0)\\ Z\ {\rm dyadic}}}
 \ \sum_{\substack{D\gg_b b^Z\\ D\ {\rm dyadic}}}
 D^{\tau(Z_0)-\sigma}
 \ll_{b,d}
 \sum_{\substack{Z\ge Z_{\min}(\delta_0)\\ Z\ {\rm dyadic}}}
 b^{-\sigma Z/2}=o_{\delta_0\downarrow0}(1).
\]
Together with the low-frequency estimate and~\eqref{eq:cover}, this proves the
proposition. The last display, multiplied by a fixed constant, may be taken
as $\varepsilon_{\rm int}(\delta_0)$; the low-frequency term is $o(1)$.
\end{proof}

\begin{proof}[Proof of Theorem~\ref{thm:main}]
Retain the preceding choices of $C_0$ and $Z_0$.
Choose $\delta_0>0$ so small that
\[
 \varepsilon_p<\frac\theta2,\qquad
 \varepsilon_p+\varepsilon_e<\theta,\qquad
 2C_0\sqrt{\delta_0}+\varepsilon_{\rm int}(\delta_0)<\frac12,
\]
and $Z_{\min}(\delta_0)\ge Z_0$ in~\eqref{eq:zmin}.

Lemma~\ref{lem:windows} gives total window mass at least $(1-o(1))mW$.
By~\eqref{eq:short},~\eqref{eq:rare}, and Propositions~\ref{prop:exterior}
and~\ref{prop:interior}, the short, rare-prefix/exterior, and interior parts
contribute at most $2C_0\sqrt{\delta_0}\,mW$, $o(mW)$, and
$(\varepsilon_{\rm int}(\delta_0)+o(1))mW$, respectively. The chosen constants
make this impossible for large $N$, so $\delta>\delta_0$.

The bounds are uniform in $W$. For $\varepsilon<\theta$, the endpoint,
rare-prefix/exterior, and low-frequency error ratios satisfy
\[
\begin{aligned}
 \frac{mL}{W}&\le\frac{L^2}{N^\theta}\to0,&
 \frac{N^{\varepsilon+o(1)}L}{W}
   &\le N^{\varepsilon-\theta+o(1)}L\to0,\\
 \frac{N^{\varepsilon_p+o(1)}mL}{W^{1/2}}
   &\le N^{\varepsilon_p-\theta/2+o(1)}L^2\to0.
\end{aligned}
\]
The remaining bounds use only $W\le N$ or $W\ge N^\theta$. Taking the maximum
of their $W$-independent thresholds gives one $N_0$ for the full range, and
$c=\delta_0$ proves the theorem. The choices of $C_0,Z_0$, and $\delta_0$
depend only on $b,w,Q$, and $\theta$; the numerator, the support, and the
$b$-primary part of the denominator enter only through the eventual
thresholds. This also proves the asserted dependence of $c$.
\end{proof}

\section*{Acknowledgments}

The author thanks Stijn Cambie for his help with and contributions to the
restructuring of the manuscript.

\section*{Disclosure of AI assistance and formal verification}

AI tools were used to assist with language editing, proof checking, and the
implementation of the Lean~4 formal verification. The author reviewed the
resulting text and proofs and takes responsibility for all mathematical
claims. The Lean~4 formalization and trust audit of the main density
conclusion, Corollary~\ref{cor:erdos}, and finite counterparts of all numbered
estimates are available at
\href{https://github.com/Hanziwww/erdos260/tree/e708b584c9a3b54857f050d6b7efbecb8a5ea27a}{commit \texttt{e708b58}}.

\end{document}